\newtheorem{theorem}{Theorem}[section]
\newtheorem{conjecture}[theorem]{Conjecture}
\newtheorem{lemma} [theorem]{Lemma}
\newtheorem{remark}[theorem]{Remark}
\newcommand{\op}{\overline{p}}
\newcommand{\opt}{\overline{OPT}}
\begin{document}
	
	\label{'ubf'}  
	\setcounter{page}{1}                                 

	\markboth {\hspace*{-9mm} \centerline{\footnotesize \sc
		Arithmetic Properties for Overpartition $k$-tuples with Odd Parts}
	}
	{ \centerline                           {\footnotesize \sc  
		Manjil P. Saikia, Abhishek Sarma and James A. Sellers} \hspace*{-9mm}              
	}

	\vspace*{-2cm}

\begin{center}
		{
			{\Large \textbf{Arithmetic Properties Modulo Powers of 2 for Overpartition $k$-Tuples with Odd Parts}
			}

			\medskip

			{\sc Manjil P. Saikia}\\
			{\footnotesize Mathematical and Physical Sciences division, School of Arts and Sciences, Ahmedabad University, Ahmedabad, Gujarat, PIN 380009, India}\\
			{\footnotesize e-mail: {\it manjil@saikia.in}}\\
			{\sc Abhishek Sarma}\\
			{\footnotesize Department of Mathematical Sciences, Napaam, Tezpur University, Tezpur, Assam, PIN 784028, India.}\\
			{\footnotesize e-mail: {\it abhitezu002@gmail.com}}\\
                {\sc James A. Sellers}\\
			{\footnotesize {Department of Mathematics and Statistics, University of Minnesota Duluth, Duluth, MN 55812, USA}}\\
			{\footnotesize e-mail: {\it jsellers@d.umn.edu}}
}\end{center}

\author{Saikia, Manjil P. \& Sarma, Abhishek \& Sellers, James A.}

	\thispagestyle{empty}

	\hrulefill

	\begin{abstract}  
        Recently, Drema and Saikia (2023) proved several congruences modulo powers of $2$ and $3$ for overpartition triples with odd parts. We extend their list substantially. We prove several congruences modulo powers of $2$ for overpartition $k$-tuples with odd parts, along with a few infinite families of congruences for overpartition triples with odd parts and overpartition $k$-tuples with odd parts (for $k=4$ and for odd $k$).
	\end{abstract}
	\hrulefill

	{\small \textbf{Keywords:} Partitions, Ramanujan-type congruences.}

	\indent {\small {\bf 2020 Mathematics Subject Classification:} 11P83, 05A17.}

\section{Introduction}
A partition of a positive integer $n$ is a finite non-increasing sequence of positive integers $\lambda=(\lambda_1, \lambda_2, \ldots, \lambda_k)$ such that the parts $\lambda_i$ sum up to $n$. For instance, $4,3+1,2+2,2+1+1$ and $1+1+1+1$ are the five partitions of $4$. The number of partitions of $n$ is denoted by $p(n)$, and its generating function found by Euler is given by
\[
\sum_{n\geq 0}p(n)q^n=\frac{1}{(q;q)_\infty},
\]
where
\[
(a;q)_\infty:=\prod_{i\geq 0}(1-aq^i), \quad |q|<1.
\] Throughout the paper, we will use the notation $f_{k} := (q^k;q^k)_{\infty}$.

Among the many beautiful properties satisfied by the partition function, undoubtedly some of the most beautiful are the congruences modulo primes that the partition function satisfies, as discovered by Ramanujan. Several mathematicians have since studied the arithmetic properties of the partition function, as well as other generalized classes of partitions. Recently, Drema and Saikia \cite{DremaSaikia} studied congruences modulo small powers of $2$ and $3$ satisfied by the function which counts overpartition triples with odd parts. The aim of the present paper is to extend their list and to also look at some more general cases.

An overpartition of a nonnegative integer $n$ is a non-increasing sequence of natural numbers whose sum is $n$, where the first occurrence (or equivalently, the last occurrence) of a number may be overlined. The eight overpartitions of 3 are
\[3,\bar{3},2+1,\bar{2}+1,2+\bar{1},\bar{2}+\bar{1},1+1+1,~\text{and}~\bar{1}+1+1.\]The number of overpartitions of $n$ is denoted by $\op(n)$ and its generating function is given by
\[
\sum_{n\geq 0}\op(n)q^n=\frac{f_{2}}{f_{1}^2}.
\]
An overpartition triple $\xi$ of $n$ is a triplet $(\xi_1, \xi_2, \xi_3)$ of overpartitions such that the sum of all the parts is $n$. For instance $(\bar 3 +2, \bar 5, 1+1+1+1+1)$ is an overpartition triple of $15$. An overpartition triple into odd parts is a triplet of overpartitions $(\xi_1, \xi_2, \xi_3)$ such that the parts of all the overpartitions $\xi_1, \xi_2$ and $\xi_3$ are restricted to odd integers. For instance $(\bar 3 +\bar 1 + 1, \bar 5, \bar 1+1+1+1+1)$ is an overpartition triple into odd parts of $15$. Let $\opt(n)$ denote the number of such overpartition triples of $n$ with odd parts.  Then the generating function for $\opt(n)$ is given by \cite[Eq. (1)]{DremaSaikia}
\begin{equation}
    \sum_{n\geq 0}\opt(n)q^n=\frac{f_{2}^9}{f_{1}^6f_{4}^3}=\prod_{i\geq 0}\left(\frac{1+q^{2i+1}}{1-q^{2i+1}}\right)^3. \label{opt}
\end{equation}

Before proceeding further, we mention (without commentary) the following easy consequence of the binomial theorem which will be used below.
 \begin{lemma}\cite[Lemma 2.6]{NayakaNaika}
      For a prime $p$, and positive integers $k$ and $l$, we have
     \begin{align*}
         f_{k}^{p^l} \equiv f_{pk}^{p^{l-1}} \pmod{p^l}.
     \end{align*}
 \end{lemma}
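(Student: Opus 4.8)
The plan is to prove the congruence by induction on $l$, with the binomial theorem supplying both the base case and the inductive engine. The underlying elementary fact is the ``freshman's dream'': for a prime $p$ one has $(1-x)^p \equiv 1 - x^p \pmod{p}$, since $p \mid \binom{p}{i}$ for every $0 < i < p$. For the base case $l = 1$, I would write $f_{k}^{p} = \prod_{j \geq 1}(1 - q^{kj})^{p}$ and apply this fact to each factor, obtaining $f_{k}^{p} \equiv \prod_{j \geq 1}(1 - q^{pkj}) = f_{pk} \pmod{p}$, which is exactly the claim when $l = 1$.

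The key lemma driving the induction is the following: if $A$ and $B$ are formal power series with integer coefficients satisfying $A \equiv B \pmod{p^{m}}$ for some $m \geq 1$, then $A^{p} \equiv B^{p} \pmod{p^{m+1}}$. To prove this I would write $A = B + p^{m}C$ with $C$ an integral power series and expand
\[
A^{p} = \sum_{i=0}^{p} \binom{p}{i} B^{p-i} (p^{m}C)^{i}.
\]
The $i = 0$ term is $B^{p}$; the $i = 1$ term equals $\binom{p}{1} p^{m} B^{p-1} C = p^{m+1} B^{p-1} C$; and for $i \geq 2$ the factor $p^{mi}$ satisfies $mi \geq 2m \geq m+1$ (using $m \geq 1$). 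Hence every term beyond $B^{p}$ is divisible by $p^{m+1}$, giving $A^{p} \equiv B^{p} \pmod{p^{m+1}}$.

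With these pieces in hand the inductive step is immediate. Assuming the statement for $l-1$, namely $f_{k}^{p^{l-1}} \equiv f_{pk}^{p^{l-2}} \pmod{p^{l-1}}$, I would apply the key lemma with $A = f_{k}^{p^{l-1}}$, $B = f_{pk}^{p^{l-2}}$, and $m = l-1$ to conclude $f_{k}^{p^{l}} \equiv f_{pk}^{p^{l-1}} \pmod{p^{l}}$, which completes the induction. The one point requiring care --- the main obstacle, such as it is --- is the bookkeeping of the powers of $p$ in the inductive step: one must verify that raising to the $p$-th power increases the exponent in the modulus by exactly one, which hinges on the $i = 1$ term contributing the factor $p^{m+1}$ while all higher terms ($i \geq 2$) are absorbed by the bound $mi \geq m+1$. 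Everything else reduces to routine manipulation of the product expansion of $f_{k}$.
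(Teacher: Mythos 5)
Your proof is correct, and it is exactly the standard binomial-theorem argument that the paper has in mind: the lemma is quoted from Nayaka--Naika without proof, described only as ``an easy consequence of the binomial theorem,'' and your base case (freshman's dream applied factor-by-factor to the product defining $f_k$) together with the lifting step $A\equiv B \pmod{p^m} \Rightarrow A^p\equiv B^p \pmod{p^{m+1}}$ is the canonical way to supply that missing argument. No gaps; the exponent bookkeeping in the $i\geq 2$ terms is handled correctly.
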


It can be shown easily \cite[Eq. (43)]{DremaSaikia} that
\[
\opt(2n+1)\equiv 0 \pmod 2,
\]
for all $n\geq 0$. 

Indeed, working modulo 2, we have
\begin{align*}
     \sum_{n\geq 0}\opt(n)q^n \equiv  
     \prod_{i\geq 0}\left(\frac{1+q^{2i+1}}{1-q^{2i+1}}\right)^3 \equiv \prod_{i\geq 0}\left(\frac{1-q^{2i+1}}{1-q^{2i+1}}\right)^3 
     \equiv 1\pmod{2}.
\end{align*}
Hence, for $n\geq1$, we have
\begin{align*}
    \opt(n) \equiv 0 \pmod{2}.\label{n2}
\end{align*}
The last congruence is reminiscent of the fact that
\[
\op(n)\equiv 0 \pmod 2, \quad \text{for all}~n>0.
\]
In this paper we prove several infinite families of Ramanujan-type congruences modulo powers of $2$ for the $\opt(n)$ function (Theorems \ref{thm:sellers-1}, \ref{thm:sellers-2}, \ref{thm:sellers-3} and \ref{thm:4}.)

As can be seen, there is nothing special about the definition of an overpartition triple, and it can be easily extended to an overpartition $k$-tuple, as was done by the third author with Keister and Vary \cite{KeisterSellersVary}. An overpartition $k$-tuple of $n$ is a $k$-tuple of overpartitions ($\xi_1, \xi_2, \ldots, \xi_k)$ such that the sum of the parts of $\xi_i$'s equals  $n$. The generating function for the number of overpartition $k$-tuples of $n$, denoted by $\op_k(n)$ is given by
\[
\sum_{n\geq 0}\op_k(n)q^n=\frac{f_{2}^k}{f_{1}^{2k}}.
\]
We can similarly define an overpartition $k$-tuple of $n$ with odd parts to be an overpartition $k$-tuple $(\xi_1, \xi_2, \ldots, \xi_k)$ of $n$ where all parts of $\xi_i$'s are odd. The generating function for the number of overpartition $k$-tuples of $n$ with odd parts, denoted by $\opt_k(n)$ is given by
\[
\sum_{n\geq 0}\opt_k(n)q^n=\frac{f_{2}^{3k}}{f_{1}^{2k}f_{4}^{k}}.
\]
The case $k=3$ here corresponds to overpartition tuples with odd parts. The case $k=2$ has also been studied, the interested reader can look at the work of Lin \cite{Lin}. The study of the arithmetic properties of the case $k=1$ was initiated by the third author and Hirschhorn \cite{HSellers}, who used the notation $\op_o(n)$ for $\opt_1(n)$, which we will use from now on.

In this paper we prove several Ramanujan-type congruences modulo powers of $2$ for the functions $\opt_{2k+1}(n)$ (for $k\geq 1$) (Theorems \ref{thm:k2}, \ref{thm:9}, \ref{Ram_congs_mod4} and \ref{thm:10-n}). We also give two infinite families of congruences modulo powers of $2$ for the $\opt_4(n)$ (Theorem \ref{inffammod16}) and $\opt_{2k+1}(n)$ (Theorem \ref{thm:inf2k+1}) functions.

The paper is organized as follows: in Section \ref{sec:thm} we state our main results (and prove three of them quickly), and in Section \ref{sec:prelim} we state some known results which will be used for our proofs. Theorems \ref{thm:sellers-1}, \ref{thm:sellers-2} and \ref{thm:sellers-3} are proved in Section \ref{sec:sellers-1}. Theorem \ref{thm:4} is then proved in Section \ref{sec:inf}, Theorem \ref{thm:10-n} is proved in Section \ref{sec:thm-10-n} and Theorems \ref{inffammod16} and \ref{thm:inf2k+1} are proved in Section  \ref{sec:inf2}. We end the paper with some concluding remarks in Section \ref{sec:conc}, which also includes Conjecture \ref{conj2i}, a counterpart of Theorem \ref{thm:10-n}.

\section{Main Results}\label{sec:thm}

The first set of results we prove in this paper are as follows. 

\begin{theorem}\label{thm:sellers-1}
    For all $n\geq 0, \alpha\geq 0$ with $n, \alpha \in \mathbb{Z}$, we have
    \begin{equation}\label{eq:sellers-1}
        \opt(2^\alpha(4n+3))\equiv 0 \pmod 4.
    \end{equation}
\end{theorem}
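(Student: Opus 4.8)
The plan is to pass to a congruence modulo $4$ for the generating function \eqref{opt} that is small enough to analyze by hand, and then to convert the divisibility claim into an elementary counting statement about the binary quadratic form $i^2+2j^2$. (One could instead iterate $2$-dissections of $f_1^2$ and induct on $\alpha$, but the theta-function route below seems cleaner and more self-contained.)

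First I would reduce $\opt$ modulo $4$. Applying the Lemma with $p=2$, $l=2$, $k=1$ gives $f_1^4\equiv f_2^2\pmod 4$, so $f_1^6=f_1^4f_1^2\equiv f_2^2f_1^2\pmod 4$; since all these series are units in $\mathbb{Z}[[q]]$, inverting preserves the congruence and yields $\opt=\frac{f_2^9}{f_1^6f_4^3}\equiv\frac{f_2^7}{f_1^2f_4^3}\pmod 4$. Recognizing the standard evaluations $\varphi(q)=\frac{f_2^5}{f_1^2f_4^2}$ and $\varphi(-q^2)=\frac{f_2^2}{f_4}$, where $\varphi(q)=\sum_{i\in\mathbb{Z}}q^{i^2}$, this says $\sum_{n\geq 0}\opt(n)q^n\equiv\varphi(q)\varphi(-q^2)\pmod 4$. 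Expanding the product, I obtain $\opt(N)\equiv r(N):=\sum_{i^2+2j^2=N}(-1)^j\pmod 4$, the sum ranging over all $(i,j)\in\mathbb{Z}^2$.

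For the base case $\alpha=0$, take $N=4n+3$. Reducing $i^2+2j^2\equiv 3\pmod 4$ forces $i$ odd and $j$ odd, hence both nonzero. Consequently the sign group $\{\pm1\}^2$ acts freely on the set of representations via $(i,j)\mapsto(\pm i,\pm j)$, the weight $(-1)^j$ is constant on each orbit, and every orbit has size exactly $4$; therefore $r(4n+3)\equiv 0\pmod 4$. To reach the powers of $2$ I would use descent: when $N$ is even, $i^2+2j^2=N$ forces $i$ even, so writing $i=2a$ and dividing by $2$ gives $r(N)=\sum_{j^2+2a^2=N/2}(-1)^j$. If $N/2$ is odd (the case $\alpha=1$, with $N/2=4n+3$), the identical orbit argument applied to the form $j^2+2a^2$ gives $\equiv 0\pmod 4$. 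If $N/2$ is even, then $j$ is forced even, the weight collapses to $1$, and one further halving identifies $r(N)$ with the \emph{unsigned} representation count $t(N/4)$ of $N/4$ by $a^2+2b^2$. Since this unsigned count is invariant under halving an even argument (again because evenness forces the leading variable even), $t\!\left(2^{\alpha-2}(4n+3)\right)=t(4n+3)$, and the representations of $4n+3$ have both coordinates odd, so $t(4n+3)\equiv 0\pmod 4$. Assembling the cases $\alpha=0$, $\alpha=1$, and $\alpha\geq 2$ finishes the argument.

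The main obstacle is essentially bookkeeping rather than a deep difficulty: I must track how the weight $(-1)^j$ and the roles of the two variables transform under each forced substitution $i\mapsto 2a$, and confirm that after at most two descent steps the weight becomes trivial, so that the problem genuinely collapses to counting representations of a number $\equiv 3\pmod 4$. The whole argument rests on a single principle—that for a number $\equiv 3\pmod 4$ every representation by $x^2+2y^2$ (or $y^2+2x^2$) has both coordinates odd and nonzero, so the solutions fall into sign-orbits of size $4$ carrying a constant weight. The one point that requires care is ensuring that at each stage the variable carrying the sign $(-1)^{(\cdot)}$ is precisely the one whose square has coefficient $1$, which is exactly what guarantees oddness of both coordinates in the base cases.
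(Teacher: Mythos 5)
Your argument is correct, and it takes a genuinely different route from the paper. The paper proves the result by iterated $2$-dissection: it substitutes \eqref{dis1byf1^2} and \eqref{dis1byf1^4} into \eqref{opt}, extracts the odd part to get $\sum_{n\geq 0}\opt(2n+1)q^n\equiv 2f_2^6\pmod 4$ (an even series, giving the case $\alpha=0$), and extracts the even part to get the internal congruence $\opt(2n)\equiv\opt(n)\pmod 4$, which is then iterated. You instead reduce the generating function modulo $4$ to $\varphi(q)\varphi(-q^2)=\frac{f_2^7}{f_1^2f_4^3}$ — both eta-quotient evaluations you quote are the standard ones, and the reduction $f_1^6\equiv f_2^2f_1^2\pmod 4$ with inversion of units is sound — and then read off $\opt(N)\equiv\sum_{i^2+2j^2=N}(-1)^j\pmod 4$. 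Your base case (for $N\equiv 3\pmod 4$ both coordinates are odd, hence nonzero, so the sign group acts freely in orbits of size $4$ carrying constant weight) and your descent steps (each halving of an even argument swaps the roles of the two variables, and after two halvings the weight trivializes, reducing everything to the unsigned count of representations of $4n+3$) all check out; note that your descent is an arithmetic incarnation of the paper's $\opt(2n)\equiv\opt(n)\pmod 4$. What your approach buys is a self-contained, conceptual explanation of where the factor of $4$ comes from (free sign-orbits on representations by $x^2+2y^2$), with all powers of $2$ handled uniformly; what the paper's approach buys is uniformity of method, since the same dissection machinery is reused for the modulus-$8$ and modulus-$16$ theorems, where a clean theta-product identification is no longer available.
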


\begin{theorem}\label{thm:sellers-2}
    For all $n\geq 0, \alpha\geq 0$ with $n, \alpha \in \mathbb{Z}$, we have
    \begin{equation}\label{eq:sellers-2}
        \opt(2^\alpha(8n+5))\equiv 0 \pmod 8.
    \end{equation}
\end{theorem}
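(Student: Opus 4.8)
The plan is to reduce the generating function modulo $8$ to a short theta quotient, repeatedly extract the even part of the resulting series, and observe that this iteration stabilizes so that an infinite family of cases collapses to a finite check. Throughout I write $\varphi(q):=\sum_{n}q^{n^2}=f_2^5/(f_1^2f_4^2)$ and $\psi(q):=\sum_{n\ge0}q^{n(n+1)/2}=f_2^2/f_1$.

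First I would simplify $\sum_{n\ge0}\opt(n)q^n=\frac{f_2^9}{f_1^6f_4^3}$ modulo $8$. Writing $\frac{1}{f_1^6}=\frac{f_1^2}{f_1^8}$ and invoking the binomial congruence quoted above with $p=2,\ l=3$ (so $f_1^8\equiv f_2^4\pmod 8$), together with the fact that a power series with constant term $1$ can be inverted modulo $8$, I obtain $\sum_{n\ge0}\opt(n)q^n\equiv\frac{f_1^2f_2^5}{f_4^3}\equiv\varphi(-q)\varphi(-q^2)^3\pmod 8$. Two elementary congruences then do most of the work: since $\varphi(\pm q^j)\equiv 1\pmod 2$, the binomial theorem gives $\varphi(\pm q^j)^4\equiv 1\pmod 8$ and $\varphi(\pm q^j)^3\equiv\varphi(\pm q^j)\pmod 4$, while $\varphi(q)^2-\varphi(-q)^2=8q\psi(q^4)^2\equiv 0\pmod 8$.

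Combining these with the classical $2$-dissections $\varphi(q)=\varphi(q^4)+2q\psi(q^8)$ and $\varphi(q)^2=\varphi(q^2)^2+4q\psi(q^4)^2$, I would apply the even-part operator $\mathcal{U}_2\colon\sum a_nq^n\mapsto\sum a_{2n}q^n$. A direct dissection yields, modulo $8$,
\[
\sum_{n\ge0}\opt(2n)q^n\equiv\varphi(-q)^3\varphi(q^2),\qquad \sum_{n\ge0}\opt(4n)q^n\equiv\varphi(q)^3\varphi(q^2)\pmod 8 ,
\]
and one further application shows $\mathcal{U}_2\big(\varphi(q)^3\varphi(q^2)\big)\equiv\varphi(q)^3\varphi(q^2)\pmod 8$. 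Thus the iteration stabilizes: $\sum_{n\ge0}\opt(2^\alpha n)q^n\equiv\varphi(q)^3\varphi(q^2)\pmod 8$ for every $\alpha\ge 2$, and only the three series for $\alpha=0,1,2$ must be examined. Because $\opt(2^\alpha(8n+5))$ is the coefficient of $q^{8n+5}$ in the $\alpha$-th of these series, it suffices to show that coefficient vanishes modulo $8$ in each of the three cases.

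Finally, for each of the three series I would isolate the odd part; in every case it has the shape $\pm 2q\,\psi(q^8)\,\varphi(\pm q^2)^3+4q(\cdots)$, so the coefficient of $q^{8n+5}$ equals $\pm 2\,[q^{8n+4}]\big(\psi(q^8)\varphi(\pm q^2)^3\big)$ plus a contribution from the $4q(\cdots)$ term. Since $\psi(q^8)$ is supported on exponents $\equiv 0\pmod 8$, I reduce $\varphi(\pm q^2)^3\equiv\varphi(\pm q^2)\pmod 4$ and note that $\varphi(\pm q^2)$ is supported on the exponents $2n^2$, none of which is $\equiv 4\pmod 8$; hence $[q^{8n+4}]\big(\psi(q^8)\varphi(\pm q^2)^3\big)\equiv 0\pmod 4$, and after multiplying by $\pm 2$ it is $\equiv 0\pmod 8$. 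A parity check disposes of the $4q(\cdots)$ term: modulo $2$ the factor $(\cdots)$ collapses to a series in $q^8$, which contributes nothing at exponents $\equiv 4\pmod 8$, so that term is $\equiv 0\pmod 8$ as well. This gives $\opt(2^\alpha(8n+5))\equiv 0\pmod 8$ for all $\alpha\ge 0$. The main obstacle is establishing the stabilization in the previous step, since this is exactly what reduces an infinite family to three explicit checks; the supporting $2$-dissection bookkeeping is routine but delicate, as discarding a term of the wrong $2$-adic order silently changes the answer.
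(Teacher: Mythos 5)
Your proposal is correct, and its skeleton matches the paper's: verify $\alpha=0,1,2$ explicitly, then show that extracting even-indexed coefficients stabilizes modulo $8$ so that every larger $\alpha$ collapses to $\alpha=2$. The difference is in the execution. The paper works entirely with eta-quotients, repeatedly substituting the $2$-dissections of $f_1^{\pm 2}$ and $f_1^{\pm 4}$ from Lemma \ref{lem-diss} and computing an explicit residual series at each stage, with the stabilization phrased as $\opt(8n)\equiv\opt(4n)\pmod 8$; you instead reduce the generating function to $\phi(-q)\phi(-q^2)^3$ modulo $8$ and phrase the stabilization as $\mathcal{U}_2\bigl(\phi(q)^3\phi(q^2)\bigr)\equiv\phi(q)^3\phi(q^2)\pmod 8$, which via $\phi(q)^2=\phi(q^2)^2+4q\psi(q^4)^2$ is an identity up to a multiple of $8$. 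What your version buys is a uniform endgame: in each of the three base series the odd part reduces modulo $8$ to $\pm 2q\psi(q^8)\phi(\pm q^2)^3$, and since $\phi(\pm q^2)^3\equiv\phi(\pm q^2)\pmod 4$ is supported on exponents $2m^2\not\equiv 4\pmod 8$ while $\psi(q^8)$ lives in $q^8$, the coefficient of $q^{8n+5}$ vanishes modulo $8$ in all three cases by a single support argument, where the paper performs three separate explicit dissections. One small point to make explicit when writing this up: for $\alpha=1,2$ the odd part first appears as $\mp 6q\psi(q^8)\phi(q^4)^2\phi(q^2)$, and you need $\phi(q^4)^2=\phi(q^2)^2-4q^2\psi(q^8)^2$ (or simply $\phi(q^4)^2\equiv 1\pmod 4$) to bring it into the shape you describe; this is exactly the routine bookkeeping you flag, and it goes through.
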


\begin{theorem}\label{thm:sellers-3}
    For all $n\geq 0, \alpha\geq 0$ with $n, \alpha \in \mathbb{Z}$, we have
    \begin{equation}\label{eq:sellers-3}
        \opt(2^\alpha(8n+7))\equiv 0 \pmod{16}.
    \end{equation}
\end{theorem}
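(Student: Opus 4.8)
The plan is to exploit the factorization $\frac{f_2^9}{f_1^6 f_4^3}=A(q)^3$, where $A(q)=\frac{f_2^3}{f_1^2 f_4}=\sum_{n\ge 0}\op_o(n)q^n$ is the generating function for overpartitions into odd parts, and to note that every argument $2^\alpha(8n+7)$ has odd part lying in the class $8n+7$. First I would record the $2$-dissection of $A$. Writing $\varphi(q)=\sum_{j\in\mathbb{Z}}q^{j^2}$ and $\psi(q)=\sum_{j\ge 0}q^{j(j+1)/2}$, one has $A=\frac{f_4\,\varphi(q)}{f_2^2}$, and the classical split $\varphi(q)=\varphi(q^4)+2q\psi(q^8)$ gives $A=U+2qV$ with $U=\frac{f_8^5}{f_2^2 f_4 f_{16}^2}$ and $V=\frac{f_4 f_{16}^2}{f_2^2 f_8}$, both power series in $q^2$. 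Cubing yields $A^3=\bigl(U^3+12q^2UV^2\bigr)+\bigl(6qU^2V+8q^3V^3\bigr)$, so extracting the odd exponents and replacing $q^2$ by $q$ produces the exact identity
\[
\sum_{m\ge 0}\opt(2m+1)q^m=6\widetilde U^2\widetilde V+8q\widetilde V^3,
\]
where $\widetilde U=\frac{f_4^5}{f_1^2 f_2 f_8^2}$ and $\widetilde V=\frac{f_2 f_8^2}{f_1^2 f_4}$ denote $U,V$ after $q^2\mapsto q$.

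For the base case $\alpha=0$, the class $8n+7$ is $2m+1$ with $m=4n+3$, so $\opt(8n+7)=6\,[\widetilde U^2\widetilde V]_{4n+3}+8\,[\widetilde V^3]_{4n+2}$, where $\widetilde U^2\widetilde V=\frac{f_4^9}{f_1^6 f_2 f_8^2}$ and $\widetilde V^3=\frac{f_2^3 f_8^6}{f_1^6 f_4^3}$. Since $6x\equiv 0\pmod{16}$ iff $x\equiv 0\pmod 8$, and $8y\equiv 0\pmod{16}$ iff $y\equiv 0\pmod 2$, it suffices to show $[\widetilde U^2\widetilde V]_{4n+3}\equiv 0\pmod 8$ and $[\widetilde V^3]_{4n+2}\equiv 0\pmod 2$. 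The second is immediate: by the Lemma with $p=2$ we have $f_1^6\equiv f_2^3\pmod 2$, hence $\widetilde V^3\equiv \frac{f_8^6}{f_4^3}\pmod 2$, a power series in $q^4$ with no term in any class $4n+2$. The first congruence is the heart of the matter, and I would attack it by a further $2$-dissection: using the Lemma ($f_1^8\equiv f_2^4\pmod 8$, so $f_1^{-6}\equiv f_1^2 f_2^{-4}$) together with $f_1^2=f_2\,\varphi(-q)$ and $\varphi(-q)=\varphi(q^4)-2q\psi(q^8)$ to isolate the odd part and peel off the factor $2$, thereby reducing $[\widetilde U^2\widetilde V]_{4n+3}\equiv 0\pmod 8$ to a coefficient congruence modulo $4$ for a simpler eta-quotient that a second dissection should settle.

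For $\alpha\ge 1$ I would induct on the $2$-adic valuation via the companion even-part identity $\sum_{n\ge 0}\opt(2n)q^n=\widetilde U^3+12q\,\widetilde U\widetilde V^2$, which gives $\opt\bigl(2^{\alpha}(8n+7)\bigr)=[\widetilde U^3]_{N'}+12[\widetilde U\widetilde V^2]_{N'-1}$ with $N'=2^{\alpha-1}(8n+7)$ and $\widetilde U\widetilde V^2=\frac{f_2 f_4^3 f_8^2}{f_1^6}$. The correction term carries a factor $12=4\cdot 3$, so only its residue modulo $4$ matters; it is precisely here that the already-available Theorems \ref{thm:sellers-1} and \ref{thm:sellers-2} furnish the mod-$4$ and mod-$8$ scaffolding needed to force $12[\widetilde U\widetilde V^2]_{N'-1}\equiv 0\pmod{16}$. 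It then remains to prove $[\widetilde U^3]_{N'}\equiv 0\pmod{16}$ on the tower $N'=2^{\alpha-1}(8n+7)$, a statement parallel to the base case but for $\widetilde U^3=\frac{f_4^{15}}{f_1^6 f_2^3 f_8^6}$ rather than for $A^3$. I expect this to be the main obstacle: the naive hope $\widetilde U^3\equiv A^3\pmod{16}$ is \emph{false}, so the induction cannot close by a single substitution, and one must instead run a simultaneous induction over the finite family of eta-quotients generated by repeatedly $2$-dissecting, verifying that each retains coefficients divisible by the appropriate power of $2$ on the classes $2^j(8n+7)$. Organizing that finite bootstrap cleanly, rather than any one computation, is where the real care will be required.
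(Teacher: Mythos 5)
Your setup is sound and your base case is essentially completable: the factorization $A(q)^3$ with $A=U+2qV$, the resulting odd-part identity, and the reduction of $\opt(8n+7)\equiv 0\pmod{16}$ to $[\widetilde U^2\widetilde V]_{4n+3}\equiv 0\pmod 8$ together with $[\widetilde V^3]_{4n+2}\equiv 0\pmod 2$ are all correct, and the second dissection you sketch for $\widetilde U^2\widetilde V$ (replace $f_{1}^{-6}$ by $f_{1}^2f_{2}^{-4}$ modulo $8$, dissect $f_{1}^2$, peel off the factor $2$, and note that the remaining quotient is congruent modulo $4$ to an even series via the dissection of $1/f_{1}^4$) does in fact close. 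The genuine gap is in the case $\alpha\ge 1$, and you have named it yourself without filling it: your induction on the $2$-adic valuation does not close because $\widetilde U^3\not\equiv A^3\pmod{16}$, and the proposed ``simultaneous induction over the finite family of eta-quotients'' is left entirely unorganized --- you neither exhibit the family, nor show it is finite, nor verify the required divisibilities on each member. As written, the theorem is established only for $\alpha=0$.

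The missing idea, and the device the paper actually uses, is a stabilization congruence: after three successive even-part extractions one finds $\opt(8n)\equiv\opt(4n)\pmod{16}$, because both generating functions reduce modulo $16$ to the same eta-quotient $f_{1}^{2}f_{2}f_{4}^{3}/f_{8}^{2}$. With that relation in hand one needs only the explicit cases $\alpha=0,1,2$ --- that is, $\opt(8n+7)$, $\opt(16n+14)$ and $\opt(32n+28)$, each of which falls out of two or three further $2$-dissections --- and then every $\alpha\ge 3$ reduces to $\alpha-1$ by writing $2^{\alpha}(8n+7)=8m$. I would recommend replacing your open-ended bootstrap with a search for such a self-reproducing even-part congruence; without it (or an equivalent closing device) your proposal does not yet constitute a proof.
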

\begin{remark}
Theorem \ref{thm:sellers-3} is an extension of a result of 
Drema and Saikia \cite[Eq. (18)]{DremaSaikia} who proved that, for all integers $n\geq 0,$
\[
\opt(16n+14)\equiv 0 \pmod 8.
\]
\end{remark}
\noindent Theorems \ref{thm:sellers-1}, \ref{thm:sellers-2} and \ref{thm:sellers-3} are proved in Section \ref{sec:sellers-1}.

Many mathematicians working in this area, including Drema and Saikia \cite[Theorems 2 and 3]{DremaSaikia} have studied infinite families of congruences. We present two such infinite families in the following theorem.
\begin{theorem}\label{thm:4}
If $p\geq3$ is a prime, then for all $n\geq0$, $\alpha\geq0$ and $\delta\geq0$ with $n, \alpha, \delta \in \mathbb{Z}$, we have
    \begin{align}\label{thminf4}
        \opt(2\cdot3^{2\alpha}\cdot p^{2\delta+1}(pn+t)+3^{2\alpha}\cdot p^{2(\delta+1)}) \equiv 0 \pmod{4},
    \end{align}
      where $t\in \{1, 2, \ldots, p-1\}$,
      
      If $p>3$ is a prime satisfying $\left(\frac{-2}{p}\right)=-1$, then for all $n\geq0$, $\alpha\geq0$ and $\delta\geq0$ with $n, \alpha, \delta \in \mathbb{Z}$, we have
    \begin{align}\label{thminf}
        \opt(8\cdot 3^{2\alpha}\cdot p^{2\delta+1}(pn+r)+3^{2\alpha}\cdot p^{2(\delta+1)}) \equiv 0 \pmod{8},
    \end{align}
    where $r\in \{1, 2, \ldots, p-1\}$.
\end{theorem}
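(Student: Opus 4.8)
The plan is to reduce $\sum_{n\ge 0}\opt(n)q^n$ modulo $4$ and modulo $8$ to explicit theta series and then extract both congruences from the arithmetic of the form $x^2+2y^2$. Using \eqref{opt} together with $\varphi(q)=f_{2}^5/(f_{1}^2f_{4}^2)$ one has the identity $\sum_{n\ge 0}\opt(n)q^n=\varphi(q)^3 f_{4}^3/f_{2}^6$, and since $\varphi(-q^2)=f_{2}^2/f_{4}$ this is the same as $\varphi(q)^3\,\varphi(-q^2)^{-3}$. Any series of the shape $1+2(\cdots)$ squares to $1$ modulo $4$, so $\varphi(q)^3\equiv\varphi(q)$ and $\varphi(-q^2)^{-3}\equiv\varphi(-q^2)\pmod 4$, giving the seed
\[
\sum_{n\ge 0}\opt(n)q^n\equiv\varphi(q)\varphi(-q^2)=\sum_{a,b\in\mathbb{Z}}(-1)^b q^{\,a^2+2b^2}\pmod 4 .
\]

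For \eqref{thminf4} I would therefore study the weighted count $\opt(N)\equiv\sum_{a^2+2b^2=N}(-1)^b\pmod 4$. Grouping the lattice points under $(a,b)\mapsto(\pm a,\pm b)$, every solution with $a\neq0$ and $b\neq0$ lies in an orbit of size $4$ on which $(-1)^b$ is constant, contributing $0$ modulo $4$; the only orbits that can survive are those with $a=0$ (which need $N/2$ to be a square) or $b=0$ (which need $N$ to be a square), and each has size $2$. Writing $N=2\cdot3^{2\alpha}p^{2\delta+1}(pn+t)+3^{2\alpha}p^{2(\delta+1)}=3^{2\alpha}p^{2\delta+1}\bigl(p(2n+1)+2t\bigr)$, the inner factor is odd and $\equiv 2t\not\equiv0\pmod p$. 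Hence $N$ is odd, so $N/2\notin\mathbb{Z}$, and its valuation at $p$ (for $p>3$) or at $3$ (for $p=3$) is odd, so $N$ is not a perfect square. Both surviving orbit types are therefore empty, and \eqref{thminf4} follows for every prime $p\ge 3$.

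For \eqref{thminf} I would refine the reduction to modulus $8$. Writing $\varphi(q)=1+2A$ and $\varphi(-q^2)=1+2B$ and expanding $(1+2A)^3$ and $(1+2B)^{-3}$ by the binomial theorem while discarding multiples of $8$, I expect the four-theta form
\[
\sum_{n\ge 0}\opt(n)q^n\equiv \varphi(q)^2-\varphi(q)\varphi(-q^2)-2\varphi(q)+2\varphi(-q^2)+1\pmod 8 ,
\]
which should be confirmed against the initial values $\opt(0),\dots,\opt(5)=1,6,18,44,102,216$ before it is used. Now take $N=8\cdot3^{2\alpha}p^{2\delta+1}(pn+r)+3^{2\alpha}p^{2(\delta+1)}=3^{2\alpha}p^{2\delta+1}\bigl(p(8n+1)+8r\bigr)$, again odd with odd $p$-valuation $2\delta+1$. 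The terms $-2\varphi(q)+1$ are supported on squares and $2\varphi(-q^2)$ on even exponents, so both vanish at our odd non-square $N$; moreover $\left(\frac{-2}{p}\right)=-1$ makes $p$ inert for $x^2+2y^2$, so the odd valuation forces $N$ to be unrepresentable and the term $-\varphi(q)\varphi(-q^2)$ to vanish as well. Thus only $\varphi(q)^2=\sum_n r_2(n)q^n$ contributes, and it remains to show $r_2(N)\equiv0\pmod 8$.

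To finish I would invoke $r_2(N)=4\sum_{d\mid N}\chi_{-4}(d)$ and show the divisor sum is even. As this sum is multiplicative and $v_p(N)=2\delta+1$, its local factor at $p$ is $\sum_{j=0}^{2\delta+1}\chi_{-4}(p)^j$; the hypothesis $\left(\frac{-2}{p}\right)=-1$ means $p\equiv5$ or $7\pmod 8$, so either $\chi_{-4}(p)=-1$ and this factor is $0$, or $\chi_{-4}(p)=1$ and it equals $2\delta+2$, which is even. In both cases the product is even, so $r_2(N)\equiv0\pmod 8$ and \eqref{thminf} holds. The main obstacle is the modulus-$8$ reduction: one must expand two binomial series at once and retain every term not obviously divisible by $8$, so the displayed four-theta identity is where an error is most likely and should be checked numerically; the remaining ingredients—the closed forms for the weighted and ordinary representation counts by $x^2+2y^2$ and $x^2+y^2$, and the parity of the divisor sum—are classical.
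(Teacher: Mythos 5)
Your argument is correct, and it takes a genuinely different route from the paper. The paper proves \eqref{thminf4} by reducing $\sum\opt(2n+1)q^n$ to $2f_{4}^3\pmod 4$ and then iterating the $3$- and $p$-dissections of $f_{1}^3$ with induction on $\alpha$ and $\delta$; it proves \eqref{thminf} by reducing $\sum\opt(8n+1)q^n$ to $6f_{1}f_{2}\pmod 8$ and iterating the dissections of $f_{1}f_{2}$ and $f_{1}$, the hypothesis $\left(\frac{-2}{p}\right)=-1$ serving only to make $(6k+1)^2+2(6m+1)^2\equiv 0\pmod p$ have a unique solution so that a single term of the double dissection survives. You instead reduce $\phi(q)^3\phi(-q^2)^{-3}$ once to theta series modulo $4$ and $8$ and finish with the classical arithmetic of the forms $x^2+2y^2$ and $x^2+y^2$. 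Both of your seed identities check out: $\sum\opt(n)q^n\equiv\phi(q)\phi(-q^2)\pmod 4$ is immediate from $\phi^2\equiv 1\pmod 4$, and your four-theta expression modulo $8$ agrees with the binomial expansion $(1+2A)^3(1+2B)^{-3}\equiv(1+6A+4A^2)(1-6B)\pmod 8$ (the two forms differ by $8A-8B+8AB$) and with the initial values $1,6,18,44,102,216$, so the one step you flagged as needing verification is in fact fine. Your route buys more than the theorem states: it shows $\opt(N)\equiv 0\pmod 4$ for every odd non-square $N$ (consistent with Theorem \ref{thm:9} combined with the Hirschhorn--Sellers criterion), and $\opt(N)\equiv r_2(N)\equiv 0\pmod 8$ for every odd $N$ with odd $p$-adic valuation at a prime with $\left(\frac{-2}{p}\right)=-1$; the stated progressions are just explicit families of such $N$, since the inner factor $p(2n+1)+2t$, respectively $p(8n+1)+8r$, is odd and prime to $p$. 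What the paper's approach buys in exchange is a proof entirely inside the $q$-dissection toolkit it already deploys for its other theorems, with no appeal to representation numbers of binary quadratic forms, though it gives less insight into which arguments actually satisfy the congruences.
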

\noindent We prove the above theorem in Section \ref{sec:inf}.

We now state some results for overpartition $k$-tuples with odd parts. The first such result is reminiscent of a result proved by the third author with Keister and Vary \cite[Theorem 8]{KeisterSellersVary}.
\begin{theorem}\label{thm:k2}
    Let $k=(2^m)r$ with $m>0$ and $r$ odd. Then for all integers $n\geq 1$ we have
    \[
    \opt_k(n)\equiv 0 \pmod{2^{m+1}}.
    \]
\end{theorem}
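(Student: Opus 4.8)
The plan is to prove the equivalent statement that the generating function is congruent to $1$ modulo $2^{m+1}$; since $\opt_k(0)=1$, this at once forces $\opt_k(n)\equiv 0 \pmod{2^{m+1}}$ for every $n\geq 1$. The whole argument rests on the binomial congruence recalled in the introduction, $f_j^{p^l}\equiv f_{pj}^{p^{l-1}}\pmod{p^l}$, applied with $p=2$ and $l=m+1$ (legitimate since $m>0$ gives $l\geq 2$). I would use it twice: with base $f_1$ to get $f_1^{2^{m+1}}\equiv f_2^{2^m}\pmod{2^{m+1}}$, and with base $f_2$ to get $f_2^{2^{m+1}}\equiv f_4^{2^m}\pmod{2^{m+1}}$.

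First I would write $k=2^m r$ and raise each of these congruences to the $r$th power (congruences modulo $2^{m+1}$ are preserved under multiplication), obtaining $f_1^{2k}=\bigl(f_1^{2^{m+1}}\bigr)^r\equiv f_2^{2^m r}=f_2^{k}$ and $f_2^{2k}=\bigl(f_2^{2^{m+1}}\bigr)^r\equiv f_4^{k}\pmod{2^{m+1}}$. Substituting the first of these for the denominator factor $f_1^{2k}$ in
\[
\sum_{n\geq 0}\opt_k(n)q^n=\frac{f_{2}^{3k}}{f_{1}^{2k}f_{4}^{k}}
\]
collapses the right-hand side to $f_2^{3k}/(f_2^k f_4^k)=f_2^{2k}/f_4^k$ modulo $2^{m+1}$; the second congruence then replaces $f_2^{2k}$ by $f_4^k$, leaving $f_4^k/f_4^k=1$.

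The only point that needs care — and the step I would flag as the main technical obstacle — is that $f_1^{2k}$ appears in a \emph{denominator}, so one cannot substitute a congruent expression for it without justification. Here I would record the elementary fact that each $f_j$ is an invertible power series in $\mathbb{Z}[[q]]$ (its constant term is $1$), and that if $A\equiv B\pmod{2^{m+1}}$ with $A,B$ both invertible over $\mathbb{Z}[[q]]$, then $1/A\equiv 1/B\pmod{2^{m+1}}$, since $1/A-1/B=(B-A)/(AB)$ and $1/(AB)\in\mathbb{Z}[[q]]$. With this in hand the two substitutions are legitimate and the proof is complete. I do not anticipate any deeper difficulty; indeed the hypothesis that $r$ is odd is used only to identify $2^m$ as the exact power of $2$ dividing $k$ (and hence to state the sharpest modulus), while the computation itself needs only that $k$ is divisible by $2^m$.
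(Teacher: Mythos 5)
Your proof is correct, but it follows a genuinely different route from the paper's. The paper keeps the generating function in its product form, writes
\[
\prod_{i}\left(\frac{1+q^{2i+1}}{1-q^{2i+1}}\right)^{k}=\left[\prod_{i}\left(1+\frac{2q^{2i+1}}{1-q^{2i+1}}\right)^{2^m}\right]^{r},
\]
and expands each $2^m$-th power by the binomial theorem, invoking the Keister--Sellers--Vary lemma that $\binom{2^m}{n}2^n\equiv 0\pmod{2^{m+1}}$ for $1\leq n\leq 2^m$ to annihilate every nonconstant term; the denominator never causes trouble because the factor $\frac{1+x}{1-x}$ is rewritten as $1+\frac{2x}{1-x}$ before anything is expanded. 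You instead work with the eta-quotient $f_2^{3k}/(f_1^{2k}f_4^k)$ and apply the congruence $f_j^{2^{m+1}}\equiv f_{2j}^{2^m}\pmod{2^{m+1}}$ twice, which collapses the quotient to $1$; the one point requiring care is the substitution inside a denominator, and your justification (congruent units of $\mathbb{Z}[[q]]$ have congruent inverses, since $1/A-1/B=(B-A)/(AB)$ with $1/(AB)\in\mathbb{Z}[[q]]$) is exactly right. Both arguments ultimately rest on the same $2$-adic facts about binomial coefficients, and both use only that $2^m\mid k$, as you correctly observe; yours has the mild advantage of reusing the lemma already quoted in the paper's introduction rather than importing a separate one, while the paper's version sidesteps the inverse-substitution issue entirely.
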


\noindent The proof is very short and we can give it immediately using the following lemma from Keister, Sellers and Vary \cite[Lemma 7]{KeisterSellersVary}.
\begin{lemma}\label{lem1}
Let $m$ be a nonnegative integer, then for all integers $n$ such that $1\leq n\leq 2^m$, we have
\[
\binom{2^m}{n}2^n\equiv 0 \pmod{2^{m+1}}.
\]
\end{lemma}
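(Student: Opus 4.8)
The plan is to compare $2$-adic valuations. Writing $\nu_2(\cdot)$ for the exponent of the highest power of $2$ dividing a positive integer, the assertion $\binom{2^m}{n}2^n \equiv 0 \pmod{2^{m+1}}$ is equivalent to the inequality $\nu_2\!\left(\binom{2^m}{n}\right) + n \geq m+1$. First I would dispose of the easy range: if $n \geq m+1$, then already $2^{m+1} \mid 2^n$ and there is nothing to prove, so it suffices to treat $1 \leq n \leq m$.

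For the remaining range the heart of the matter is an exact evaluation of the valuation of the binomial coefficient. The key step is to establish that
\[
\nu_2\!\left(\binom{2^m}{n}\right) = m - \nu_2(n), \qquad 1 \leq n \leq 2^m .
\]
I would prove this from the absorption identity $n\binom{2^m}{n} = 2^m \binom{2^m-1}{n-1}$, which upon applying $\nu_2$ yields $\nu_2(n) + \nu_2\!\left(\binom{2^m}{n}\right) = m + \nu_2\!\left(\binom{2^m-1}{n-1}\right)$. The remaining input is that $\binom{2^m-1}{n-1}$ is odd: since the base-$2$ expansion of $2^m-1$ is a string of $m$ ones, Lucas' theorem (equivalently, Kummer's theorem, as no carries occur in the base-$2$ addition $(n-1)+(2^m-n)=2^m-1$) forces every coefficient $\binom{2^m-1}{j}$ to be odd. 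Hence $\nu_2\!\left(\binom{2^m-1}{n-1}\right)=0$, and the displayed formula follows.

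With the formula in hand, the target inequality becomes $m - \nu_2(n) + n \geq m+1$, that is, $n - \nu_2(n) \geq 1$. This is immediate: from $2^{\nu_2(n)} \leq n$ we obtain $\nu_2(n) \leq \log_2 n \leq n-1$ for every $n \geq 1$ (since $2^{n-1}\geq n$), so $n - \nu_2(n) \geq 1$ holds in all cases. Combining this with the easy range $n\geq m+1$ completes the argument.

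I expect no serious obstacle here; the only point demanding care is the parity of $\binom{2^m-1}{n-1}$, where one should state plainly which of Lucas' or Kummer's theorem is being invoked (either suffices). An alternative route would bypass the absorption identity entirely and compute $\nu_2\!\left(\binom{2^m}{n}\right)$ directly via Kummer's theorem as the number of carries in the base-$2$ sum $n+(2^m-n)$; I would nevertheless keep the absorption-identity computation as the primary argument, since it is shorter and essentially self-contained.
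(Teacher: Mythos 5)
Your proof is correct, but note that the paper itself offers no proof of this lemma to compare against: it is quoted verbatim from Keister, Sellers and Vary \cite[Lemma 7]{KeisterSellersVary}, and the original argument there runs along the same lines as yours, via the absorption identity $n\binom{2^m}{n}=2^m\binom{2^m-1}{n-1}$ and the bound $\nu_2(n)\leq n-1$. One remark on economy: you prove more than you need. The exact evaluation $\nu_2\bigl(\binom{2^m}{n}\bigr)=m-\nu_2(n)$ requires the Lucas/Kummer input that $\binom{2^m-1}{n-1}$ is odd, but the lemma only needs the one-sided inequality $\nu_2\bigl(\binom{2^m}{n}\bigr)\geq m-\nu_2(n)$, which falls out of the absorption identity immediately, since $\nu_2\bigl(\binom{2^m-1}{n-1}\bigr)\geq 0$ trivially. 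Combined with $\nu_2(n)\leq n-1$ (your $2^{n-1}\geq n$ observation), this gives $\nu_2\bigl(\binom{2^m}{n}2^n\bigr)\geq m-\nu_2(n)+n\geq m+1$ with no appeal to Lucas or Kummer at all, and it also renders your preliminary split at $n\geq m+1$ unnecessary, since the valuation argument covers the whole range $1\leq n\leq 2^m$ uniformly. So the Lucas step is not a gap, merely surplus machinery; trimming it yields a fully self-contained three-line proof that coincides with the cited source's.
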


\begin{proof}[Proof of Theorem \ref{thm:k2}]
We have the following
\begin{align*}
  \sum_{n\geq 0}\opt_{k}(n)q^n&  =\prod_{i=1}^\infty \left(\frac{1+q^{2i+1}}{1-q^{2i+1}}\right)^{k}\\
  &=\left[\prod_{i=1}^\infty \left(\frac{1+q^{2i+1}}{1-q^{2i+1}}\right)^{2^m}\right]^r\\
  &=\left[\prod_{i=1}^\infty \left(1+\frac{2q^{2i+1}}{1-q^{2i+1}}\right)^{2^m}\right]^r.
\end{align*}
Now, using the binomial theorem and Lemma \ref{lem1} we can conclude
\[
 \sum_{n\geq 0}\opt_{k}(n)q^n\equiv 1\pmod{2^{m+1}}.
\]
This completes the proof.
\end{proof}

We now state three results for the general cases of $\opt_{2k+1}(n)$.
\begin{theorem}\label{thm:9}
    For all $n\geq 0$ and $k\geq 1$ with $n, k \in \mathbb{Z}$ we have
    \[
    \opt_{2k+1}(n)\equiv \op_o(n)\pmod4.
    \]
\end{theorem}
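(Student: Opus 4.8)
The plan is to prove the congruence $\opt_{2k+1}(n)\equiv \op_o(n)\pmod 4$ by comparing the generating functions of both sides modulo $4$ and showing they agree. Recall that $\op_o(n)=\opt_1(n)$, so the generating function on the right is $f_2^3/(f_1^2 f_4)$, while the left-hand generating function is $f_2^{3(2k+1)}/(f_1^{2(2k+1)}f_4^{2k+1})$.

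First I would factor out the $k=0$ contribution and write
\[
\sum_{n\geq 0}\opt_{2k+1}(n)q^n=\frac{f_2^3}{f_1^2 f_4}\cdot\left(\frac{f_2^3}{f_1^2 f_4}\right)^{2k}
=\left(\sum_{n\geq 0}\op_o(n)q^n\right)\cdot\left(\frac{f_2^3}{f_1^2 f_4}\right)^{2k}.
\]
So it suffices to prove that the extra factor satisfies $\left(f_2^3/(f_1^2 f_4)\right)^{2k}\equiv 1\pmod 4$. The natural route is the product form used already in the proof of Theorem \ref{thm:k2}: since $f_2^3/(f_1^2 f_4)=\prod_{i\geq 0}\left((1+q^{2i+1})/(1-q^{2i+1})\right)$, the even power is
\[
\left(\frac{f_2^3}{f_1^2 f_4}\right)^{2k}=\prod_{i\geq 0}\left(1+\frac{2q^{2i+1}}{1-q^{2i+1}}\right)^{2k}.
\]

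Next I would expand each local factor by the binomial theorem. Writing $x_i=2q^{2i+1}/(1-q^{2i+1})$, the term $(1+x_i)^{2k}$ has constant term $1$, a linear term $\binom{2k}{1}x_i=2k\cdot x_i$ which carries a factor of $2\cdot 2=4$ and hence vanishes mod $4$, and all higher terms $\binom{2k}{j}x_i^j$ with $j\geq 2$ carry a factor $2^j\equiv 0\pmod 4$. Thus each factor is $\equiv 1\pmod 4$ individually. Taking the product over all $i$ then gives $\left(f_2^3/(f_1^2 f_4)\right)^{2k}\equiv 1\pmod 4$, which combined with the factorization above yields $\opt_{2k+1}(n)\equiv\op_o(n)\pmod 4$ coefficientwise.

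The one point requiring genuine care is the claim that a congruence holding for each local factor passes to the infinite product, since the individual factors are themselves power series rather than integers. The clean justification is that each factor equals $1+4g_i(q)$ for some power series $g_i(q)$ with integer coefficients; multiplying finitely many such factors gives $1+4(\text{integer power series})$, and because $x_i=2q^{2i+1}/(1-q^{2i+1})$ begins at degree $2i+1$, only finitely many factors contribute to any fixed coefficient, so the infinite product is a well-defined power series congruent to $1$ modulo $4$. This is the same mechanism underlying the proof of Theorem \ref{thm:k2}, so I expect no real obstacle beyond stating this convergence-of-coefficients argument carefully; the combinatorial heart is simply the observation that $2k$ is even, forcing the linear binomial coefficient to absorb an extra factor of $2$.
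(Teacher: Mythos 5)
Your proof is correct, but it runs along a different track from the paper's. The paper's proof is a two-line eta-quotient reduction: it writes the generating function as $f_{2}^{6k}f_{2}^{3}/(f_{1}^{4k}f_{1}^{2}f_{4}^{2k}f_{4})$ and invokes the binomial-theorem lemma $f_{k}^{p^{l}}\equiv f_{pk}^{p^{l-1}}\pmod{p^{l}}$ (with $p=2$, $l=2$, giving $f_{1}^{4k}\equiv f_{2}^{2k}$ and $f_{4}^{2k}\equiv f_{2}^{4k}$ modulo $4$) so that the $k$-dependent factors cancel outright, leaving $f_{2}^{3}/(f_{1}^{2}f_{4})$. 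You instead factor off the $\op_o$ generating function and show that the leftover factor $\bigl(f_{2}^{3}/(f_{1}^{2}f_{4})\bigr)^{2k}$ is $\equiv 1\pmod 4$ by expanding the infinite product $\prod_{i\geq 0}\bigl(1+2q^{2i+1}/(1-q^{2i+1})\bigr)^{2k}$ termwise, exactly the mechanism of the paper's proof of Theorem \ref{thm:k2} (and in the spirit of Lemma \ref{lem1}, though here you only need the trivial observation that $\binom{2k}{1}\cdot 2$ and $2^{j}$ for $j\geq 2$ are divisible by $4$). Both arguments are elementary and essentially equivalent in depth; the paper's is shorter, while yours has the merit of unifying Theorems \ref{thm:k2} and \ref{thm:9} under one device and of proving slightly more (it gives $\opt_{m+2k}(n)\equiv\opt_{m}(n)\pmod 4$ for any $m\geq 1$, not just $m=1$). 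Your closing remark about passing the congruence through the infinite product is the right point to flag, and your justification --- each factor is $1+4g_i(q)$ with $g_i$ supported in degrees $\geq 2i+1$, so each fixed coefficient sees only finitely many nontrivial factors --- is exactly what is needed.
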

\begin{proof}
    We have
 \begin{align}
        \sum_{n\geq 0}\opt_{2k+1}(n)q^n
        &=\frac{f_{2}^{3(2k+1)}}{f_{1}^{2(2k+1)}f_{4}^{2k+1}} \nonumber \\
        &\equiv \frac{f_{2}^{6k}f_{2}^3}{f_{2}^{2k}f_{2}^{4k}f_{1}^2f_{4}} \pmod{4} \nonumber \\
        &\equiv \frac{f_{2}^{3}}{f_{1}^{2}f_{4}} \pmod{4} \label{cong2k+1}\\
        &\equiv \sum_{n\geq 0}\op_o(n) q^n \pmod{4}. \nonumber 
    \end{align}
\end{proof}

We can use Theorem \ref{thm:9} to prove an infinite family of Ramanujan--like congruences modulo 4 for the functions $\opt_{2k+1}.$
\begin{theorem} 
\label{Ram_congs_mod4}
For all $n\geq 0$, $k\geq 0$ with $n, k \in \mathbb{Z}$ and $p\geq 5$ prime, and all quadratic nonresidues $r$ modulo $p$ with $1\leq r\leq p-1$ we have
    $$
\opt_{2k+1}(2pn+R) \equiv 0 \pmod 4 ,
    $$
    where 
    $$
    R = \begin{cases}
			r, & \text{if $r$ is odd,}\\
            p+r, & \text{if $r$ is even.}
		 \end{cases}
   $$
\end{theorem}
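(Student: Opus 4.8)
The plan is to leverage Theorem~\ref{thm:9}, which already reduces the problem to understanding $\op_o(n)$ modulo $4$. Since $\opt_{2k+1}(n) \equiv \op_o(n) \pmod 4$ for all $k \geq 1$, it suffices to establish the congruence $\op_o(2pn+R) \equiv 0 \pmod 4$ for the single function $\op_o(n)$, whose generating function is $f_2^3/(f_1^2 f_4)$. So the first step is to obtain a workable description of $\op_o(n) \bmod 4$, ideally an arithmetic or theta-series representation that isolates the $2$-adic behaviour. Working modulo $4$, I would simplify $f_2^3/(f_1^2 f_4)$ using Lemma~1.1 (the $f_k^{p^l} \equiv f_{pk}^{p^{l-1}}$ congruence with $p=2$, so $f_1^2 \equiv f_2$ and $f_4 \equiv$ related powers) to rewrite the generating function in terms of $q$-series whose coefficients have a transparent form.

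The key structural step I expect is to express $\sum \op_o(n) q^n$ modulo $4$ in terms of classical theta functions, so that the coefficients are governed by representations of integers by a binary quadratic form. The appearance of a Legendre-symbol condition on quadratic (non)residues $r$ modulo $p$ in the statement strongly signals that $\op_o(n) \bmod 4$ is controlled by $\psi(q)$ or a similar theta function whose $q$-expansion $\sum_{j\geq 0} q^{j(j+1)/2}$ has exponents running over triangular numbers, or by $\varphi(q) = \sum_n q^{n^2}$. The plan is therefore to reduce the series to something like $\psi(q)^a$ modulo $4$ and then extract the arithmetic progression. Concretely, after the reduction I would dissect by the residue of the exponent modulo $2$ to handle the factor of $2$ in $2pn+R$, then examine the resulting single theta-type series modulo $p$.

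Once $\op_o(n) \bmod 4$ is pinned to a quadratic form, the congruence follows from a standard quadratic-residue argument: the coefficient in question counts (with signs/weights) solutions of an equation such as $8m + c = $ (quadratic form) for $m$ in the relevant progression, and when the arithmetic progression forces the quadratic form to take a value that is a fixed nonresidue times a unit modulo $p$, there are no solutions, so the coefficient vanishes modulo $4$. The two cases in the definition of $R$ — $r$ versus $p+r$ according to the parity of $r$ — are exactly the adjustment needed to guarantee that $R$ is odd (matching the odd-parts restriction, which forces attention to odd exponents) while preserving $R \equiv r \pmod p$, so that the nonresidue condition $\left(\frac{r}{p}\right) = -1$ remains the operative constraint. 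I would verify that this parity correction is consistent, namely that replacing $r$ by $p+r$ when $r$ is even does not disturb the residue class modulo $p$.

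The main obstacle I anticipate is the first reduction: getting a clean theta-function (or quadratic-form) representation of $f_2^3/(f_1^2 f_4)$ modulo $4$, since dividing by $f_1^2 f_4$ is not immediately a polynomial operation and the mod-$4$ simplifications via Lemma~1.1 must be combined carefully with known $2$-dissections of $1/f_1^2$ or $1/\varphi(-q)$. Once that representation is in hand, the residue argument is routine; pinning down the exact theta identity and confirming that the progression $2pn+R$ lands on the desired nonresidue class is the delicate part.
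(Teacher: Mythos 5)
Your proposal is correct and takes essentially the same route as the paper: reduce to $\op_o(n)$ via Theorem \ref{thm:9}, then use the fact that $\op_o(n)\equiv 0\pmod 4$ for $n\geq 1$ exactly when $n$ is neither a square nor twice a square, which is precisely what the oddness of $R$ (never twice a square) and the nonresidue condition $R\equiv r\pmod p$ (never a square) rule out. The only difference is that the paper simply cites this characterization from Hirschhorn and Sellers \cite{HSellers}, whereas you propose to re-derive it from the theta-function form of the generating function modulo $4$; that derivation is feasible (Theorem \ref{hs-1} reduces modulo $4$ to $\phi(q)\phi(q^2)$) but unnecessary.
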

\begin{proof}
From the work of Hirschhorn and the third author \cite[Theorem 1.1]{HSellers}, we know that, for all $n\geq 1,$ $\op_o(n) \equiv 0 \pmod{4}$ if and only if $n$ is neither a square nor twice a square.  So we simply need to show that $2pn+R$ as defined above is never a square and never twice a square.  
Note that $2pn+R$ is always odd by definition, so we see immediately that $2pn+R$ can never be twice a square.  
Next, we see that $2pn+R \equiv r \pmod{p}$ again from the definition of $R.$  And since $r$ is defined to be a quadratic nonresidue modulo $p$, we know that $r$ cannot be congruent to a square modulo $p.$  Thus, $2pn+R$ cannot equal a square.  This concludes the proof.  
\end{proof}

\begin{theorem}\label{thm:10-n}
 For all $n\geq 0$ and $k\geq 0$ with $n, k \in \mathbb{Z}$, we have
    \begin{align*}
    \opt_{2k+1}(8n+1)&\equiv 0 \pmod 2,\\
    \opt_{2k+1}(8n+2)&\equiv 0 \pmod 2,\\
    \opt_{2k+1}(8n+3)&\equiv 0 \pmod 4,\\
    \opt_{2k+1}(8n+4)&\equiv 0 \pmod 2,\\
    \opt_{2k+1}(8n+5)&\equiv 0 \pmod 8,\\
    \opt_{2k+1}(8n+6)&\equiv 0 \pmod 4,\\
    \opt_{2k+1}(8n+7)&\equiv 0 \pmod{16}.
    \end{align*}
\end{theorem}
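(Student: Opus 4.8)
The plan is to reduce everything to the two building-block cases $k=0$ (the function $\op_o=\opt_1$) and $k=1$ (the function $\opt=\opt_3$), the latter of which is already governed by Theorems \ref{thm:sellers-1}, \ref{thm:sellers-2} and \ref{thm:sellers-3}. Write $B:=\sum_{n\ge0}\op_o(n)q^n=\frac{f_2^3}{f_1^2f_4}$, so that $\sum_{n\ge0}\opt_{2k+1}(n)q^n=B^{2k+1}$. As already observed in the excerpt, $B\equiv1\pmod2$; hence $B^2\equiv1\pmod4$ and we may write $B^2=1+4D$ for an integer power series $D$. Since $4^2\equiv0\pmod{16}$, the binomial theorem gives $B^{2k+1}=B(1+4D)^k\equiv B(1+4kD)\pmod{16}$, and because $B(1+4D)=B^3$ we obtain the master congruence
\[
\opt_{2k+1}(n)\equiv(1-k)\,\op_o(n)+k\,\opt(n)\pmod{16}\qquad(n,k\ge0).
\]
This extends Theorem \ref{thm:9} from modulus $4$ to modulus $16$ and is the engine of the whole argument. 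The key elementary point is that a congruence of the form $(1-k)\op_o(m)+k\,\opt(m)\equiv0\pmod{2^s}$ holds for \emph{all} $k\ge0$ precisely when it holds for $k=0$ and $k=1$ separately.

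Second, I would feed the seven target progressions into this congruence. The three modulo-$2$ lines are immediate from $B^{2k+1}\equiv1\pmod2$. For the two modulo-$4$ lines I would invoke Theorem \ref{thm:9} to replace $\opt_{2k+1}$ by $\op_o$, and for the modulo-$8$ and modulo-$16$ lines I would read off the master congruence: the $\opt$-parts at $8n+5$ and $8n+7$ are supplied by Theorems \ref{thm:sellers-2} and \ref{thm:sellers-3} (taking $\alpha=0$). By the observation above, the theorem then reduces entirely to the four $k=0$ statements $\op_o(8n+3)\equiv0\pmod4$, $\op_o(8n+6)\equiv0\pmod4$, $\op_o(8n+5)\equiv0\pmod8$ and $\op_o(8n+7)\equiv0\pmod{16}$.

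To prove these base cases I would $2$-dissect $B=\frac{f_2^3}{f_1^2f_4}$ using the standard $2$-dissection of $1/f_1^2$ (a known identity of the kind collected in Section \ref{sec:prelim}). A single application isolates the odd part and yields $\op_o(2n+1)=2[q^n]G$ with $G=\frac{f_2f_8^2}{f_1^2f_4}$; a second application splits $G$ into its even and odd parts. Each odd target first contributes an explicit factor $2$ by passing to the odd part of $B$; the class $8n+7$ contributes a second factor $2$ on the second dissection, while $8n+5$ lands instead in the even part. In both cases the surviving coefficient is an odd-index coefficient which I would reduce modulo $4$ via $f_1^4\equiv f_2^2\pmod4$ (equivalently $1/f_1^4\equiv1/f_2^2$); the series that remains is a function of $q^2$ alone, so its odd-index coefficients vanish modulo $4$, contributing a final factor $4$. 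Multiplying, $8n+5$ accumulates $2\cdot4=8$ and $8n+7$ accumulates $2\cdot2\cdot4=16$, as required. The two modulo-$4$ classes $8n+3,8n+6$ are handled identically, reducing modulo $2$ via $f_1^2\equiv f_2$.

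The main obstacle is precisely this last step: one must push each progression through the correct number of dissections, track exactly the powers of $2$ extracted at each stage, and verify that the series left after the modular reduction really is a series in $q^2$ so that the odd coefficients drop out. The algebraic reduction to $\op_o$ and $\opt$ is short and clean; the entire substance of the proof lives in the iterated $2$-dissection of $\op_o$, and it must be organized so that the extracted powers of $2$ add up to exactly the claimed moduli and not less.
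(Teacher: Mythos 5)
Your proposal is correct, but it takes a genuinely different route from the paper. The paper proves Theorem \ref{thm:10-n} by writing $\sum_{n\geq 0}\opt_t(n)q^n=\phi(q)^t\phi(q^2)^t\phi(q^4)^{2t}\phi(q^8)^{4t}\cdots$ via Theorem \ref{hs-1}, and then establishing (Lemma \ref{lem:7}) by induction on odd $t$ in steps of two, with Mathematica-assisted expansions modulo $16$, that the $8$-dissection of $(\phi(q)\phi(q^2)\phi(q^4)^2)^t$ has coefficients with exactly the required $2$-adic divisibilities; the tail $\prod_{i\geq 3}\phi(q^{2^i})^{2^{i-1}t}$ is a series in $q^8$ and so does not disturb the residue classes. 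You instead exploit the algebraic identity $B^{2k+1}=B(1+4D)^k\equiv B+4kDB=(1-k)B+kB^3\pmod{16}$ (where $B=f_2^3/(f_1^2f_4)$ and $B^2=1+4D$), which collapses the whole family to the two cases $k=0$ and $k=1$; the $k=1$ case is already covered by Theorems \ref{thm:sellers-1}--\ref{thm:sellers-3} at $\alpha=0,1$, and the $k=0$ case reduces to four congruences for $\op_o$ that follow from short iterated applications of \eqref{dis1byf1^2} together with $f_1^4\equiv f_2^2\pmod 4$ (and are in any case contained in the Hirschhorn--Sellers paper \cite{HSellers} cited here). I checked your accounting of the powers of $2$: the $4n+1$ branch carries a factor $2$ and its odd part a further $4$, giving $8$ at $8n+5$, while the $4n+3$ branch carries $4$ and its odd part a further $4$, giving $16$ at $8n+7$, exactly as you claim. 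Your approach buys a fully hand-checkable argument that avoids the computer-algebra induction and makes transparent why only the exponents $2k+1\in\{1,3\}$ matter modulo $16$, at the cost of leaning on the earlier theorems of the paper; the paper's approach is heavier computationally but produces the explicit $8$-dissection itself and parallels the method of \cite{sellers-new}, which is the precedent the authors intended to follow.
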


\noindent Theorem \ref{thm:10-n} is proved in Section \ref{sec:thm-10-n}. The approach used to prove Theorem \ref{thm:10-n} is very similar to the one that was taken recently by the third author \cite{sellers-new} to prove a conjecture of the first author \cite{Saikia}.

We will also prove the following infinite family of congruences similar to \eqref{thminf}.
\begin{theorem}\label{inffammod16}
    If $p\geq3$ is a prime, then for all $n\geq0$, $\alpha\geq0$ and $\delta\geq0$ with $n, \alpha, \delta \in \mathbb{Z}$, we have
    \begin{align}
        \opt_4(8\cdot3^{2\alpha}\cdot p^{2\delta+1}(pn+t)+2\cdot3^{2\alpha}\cdot p^{2(\delta+1)}+3^{2\alpha}-1) \equiv 0 \pmod{16},
    \end{align}
      where $t\in \{1, 2, \ldots, p-1\}$.
\end{theorem}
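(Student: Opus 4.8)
The plan is to bypass the prime-dissection machinery entirely and instead reduce the generating function directly modulo $16$. Since
\[
\sum_{n\geq0}\opt_4(n)q^n=\left(\frac{f_2^3}{f_1^2f_4}\right)^4
\]
is the fourth power of the generating function for $\op_o(n)=\opt_1(n)$, I would first rewrite the base as a ratio of theta functions. Using the standard evaluations $\varphi(-q)=f_1^2/f_2$ and $\varphi(-q^2)=f_2^2/f_4$, where $\varphi(-q)=\sum_{n=-\infty}^{\infty}(-1)^nq^{n^2}=1+2u$ with $u=\sum_{n\geq1}(-1)^nq^{n^2}$, one checks immediately that $f_2^3/(f_1^2f_4)=\varphi(-q^2)/\varphi(-q)$, so that
\[
\sum_{n\geq0}\opt_4(n)q^n=\frac{\varphi(-q^2)^4}{\varphi(-q)^4}.
\]

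Next I would expand both fourth powers modulo $16$. The binomial theorem gives $\varphi(-q)^4=1+8u+24u^2+32u^3+16u^4\equiv1+8(u+u^2)\pmod{16}$, and likewise $\varphi(-q^2)^4\equiv1+8(v+v^2)\pmod{16}$ with $v=\sum_{n\geq1}(-1)^nq^{2n^2}$. Because $\varphi(-q)^4$ has constant term $1$, its inverse modulo $16$ is $1-8(u+u^2)$, since the square of an $8$-divisible series vanishes modulo $16$. Multiplying out and discarding the $64$-divisible cross term yields the clean congruence
\[
\sum_{n\geq0}\opt_4(n)q^n\equiv1+8\bigl(v+v^2-u-u^2\bigr)\pmod{16}.
\]

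The crux is then a parity computation on the bracketed series. Working modulo $2$, the Frobenius endomorphism gives $u^2\equiv\sum_{n\geq1}q^{2n^2}\equiv v$ and $v^2\equiv\sum_{n\geq1}q^{4n^2}$, whence $v+v^2-u-u^2\equiv u+v^2\pmod2$. The coefficient of $q^N$ in $u+v^2$ is, modulo $2$, the indicator that $N$ is a perfect square plus the indicator that $N$ is four times a perfect square; these two indicators agree exactly when $N$ is an even square and then cancel, so the coefficient is odd precisely when $N$ is an odd perfect square. Hence $\opt_4(N)\equiv8\pmod{16}$ when $N$ is an odd square and $\opt_4(N)\equiv0\pmod{16}$ for every other $N\geq1$.

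To finish I would need only the parity of the argument. Since $3^{2\alpha}=9^\alpha\equiv1\pmod8$, the summand $3^{2\alpha}-1$ is even, while $8\cdot3^{2\alpha}p^{2\delta+1}(pn+t)$ and $2\cdot3^{2\alpha}p^{2(\delta+1)}$ are manifestly even; thus the full argument is even and cannot be an odd perfect square, and the congruence follows. The real obstacle is not the arithmetic progression but the mod-$2$ cancellation identity $u^2\equiv v$, which is what forces the exceptional residue to occur only at odd squares; once that is in hand the elaborate progression enters only through the evenness of its argument. Indeed, this plan shows the stronger statement that $\opt_4(N)\equiv0\pmod{16}$ for every even $N\geq2$, of which Theorem \ref{inffammod16} is the special case arising from the authors' general template.
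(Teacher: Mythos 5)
Your argument is correct, and it takes a genuinely different---and strictly stronger---route than the paper. The paper reduces $f_2^{12}/(f_1^8f_4^4)$ modulo $16$ via $f_1^{16}\equiv f_2^8$, extracts three successive $2$-dissections to land on a congruence for $\sum_{n\ge0}\opt_4(8n+1)q^n$ of the shape $8$ times a cube, and then pushes the $3$-dissection and the $p$-dissection of $f_1^3$ through an induction on $\delta$ to manufacture the arithmetic progression. You instead note that $f_2^3/(f_1^2f_4)=\phi(-q^2)/\phi(-q)$ and compute the entire generating function modulo $16$ in closed form, arriving at
\begin{equation*}
\sum_{n\ge0}\opt_4(n)q^n\equiv 1+8\sum_{\substack{m\ge1\\ m\ \text{odd}}}q^{m^2}\pmod{16},
\end{equation*}
i.e.\ $\opt_4(N)\equiv0\pmod{16}$ for every $N\ge1$ that is not an odd perfect square. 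Every step checks: the binomial reduction $(1+2u)^4\equiv1+8(u+u^2)$, the inversion $(1+8w)^{-1}\equiv1-8w$ (valid since $64\equiv0\pmod{16}$), and the mod-$2$ Frobenius identity $u^2\equiv v$; I also confirmed numerically that $\opt_4(1)=8$ and $\opt_4(9)=12072\equiv8\pmod{16}$ while $\opt_4(N)\equiv0$ for the remaining $N\le9$. Since each of the three summands in the theorem's argument is even, that argument can never be an odd square, and the theorem follows; your version shows the elaborate progression enters only through the parity of its terms, which is more illuminating than the dissection chase. (Your closed form incidentally reveals that the paper's intermediate congruence $\sum_{n\ge0}\opt_4(8n+1)q^n\equiv 8f_2^3\pmod{16}$ should read $8f_1^3$, since $8n+1$ is an odd square exactly when $n$ is triangular, and that the earlier displayed lines $8f_{32}$ and $8f_{16}$ in that chain are garbled; the theorem itself survives because evenness of the argument is all that is really used.) The one thing the paper's template buys that yours does not is mechanical transferability to the other $\opt_k$ families treated there, whereas your collapse modulo $16$ is special to the exponent $4$.
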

Lastly, the following result generalizes \eqref{thminf} for $\opt_k(n)$, where $k$ is odd.
\begin{theorem}\label{thm:inf2k+1}
If $p\geq3$ is a prime, then for all $n\geq0$, $\alpha\geq0$, $\delta\geq0$ and $k\geq 1$ with $n,k, \alpha, \delta \in \mathbb{Z}$, we have
    \begin{align*}
        \opt_{2k+1}(2\cdot3^{2\alpha}\cdot p^{2\delta+1}(pn+t)+3^{2\alpha}\cdot p^{2(\delta+1)}) \equiv 0 \pmod{4},
    \end{align*}
      where $t\in \{1, 2, \ldots, p-1\}$.
\end{theorem}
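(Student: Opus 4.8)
The plan is to reduce the claim modulo $4$ to the congruence \eqref{thminf4} already established for $\opt = \opt_3$ in Theorem \ref{thm:4}. The crucial observation is that Theorem \ref{thm:9} furnishes, for \emph{every} $k \geq 1$, the uniform congruence $\opt_{2k+1}(n) \equiv \op_o(n) \pmod 4$. In particular, taking $k=1$ recovers $\opt(n) = \opt_3(n) \equiv \op_o(n) \pmod 4$, so that
\[
\opt_{2k+1}(n) \equiv \opt(n) \pmod 4
\]
for all $k \geq 1$ and all $n \geq 0$. Thus the entire family $\{\opt_{2k+1}\}_{k\geq 1}$ collapses modulo $4$ to a single function, represented by the overpartition-triple counter $\opt$.

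Having made this reduction, I would substitute the specific index
\[
N = 2\cdot 3^{2\alpha}\cdot p^{2\delta+1}(pn+t) + 3^{2\alpha}\cdot p^{2(\delta+1)}
\]
into the congruence above to obtain $\opt_{2k+1}(N) \equiv \opt(N) \pmod 4$. The first family in Theorem \ref{thm:4}, namely \eqref{thminf4}, asserts precisely that $\opt(N)\equiv 0 \pmod 4$ for every prime $p\geq 3$, every $t\in\{1,\dots,p-1\}$, and all $n,\alpha,\delta\geq 0$. Chaining the two statements gives $\opt_{2k+1}(N)\equiv 0 \pmod 4$, which is exactly the desired conclusion.

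There is essentially no hard step here once Theorems \ref{thm:9} and \ref{thm:4} are in hand: the result is a direct corollary obtained by transporting the $k=1$ congruence along the mod-$4$ equivalence of the whole family. All the genuine analytic work—the $2$-dissection manipulations and the quadratic-residue argument underlying \eqref{thminf4}—has already been carried out in the proof of Theorem \ref{thm:4}. Consequently, the only point requiring care is the bookkeeping check that the arithmetic progression indexing $\opt_{2k+1}$ in the present statement coincides term-for-term with the one appearing in \eqref{thminf4}, which it does.
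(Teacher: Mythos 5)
Your proposal is correct and matches the paper's strategy: the paper likewise exploits the fact that $\sum_{n\ge 0}\opt_{2k+1}(n)q^n \equiv f_2^3/(f_1^2 f_4) \pmod 4$ independently of $k$ (the congruence \eqref{cong2k+1} underlying Theorem \ref{thm:9}), reduces to the same series $2f_4^3$ as in \eqref{inf1mod4}, and then defers to the proof of \eqref{thminf4}. Your packaging --- chaining $\opt_{2k+1}(n)\equiv \op_o(n)\equiv \opt(n)\pmod 4$ and citing \eqref{thminf4} as a finished statement rather than re-running its first dissection step --- is a slightly more economical presentation of the same argument.
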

\noindent Theorems \ref{inffammod16} and \ref{thm:inf2k+1} are proved in Section \ref{sec:inf2}.

\section{Preliminaries}\label{sec:prelim}

Ramanujan's general theta function $f(a,b)$ is defined by 
\begin{align*}
	f(a,b) = \sum_{k=-\infty}^{\infty} a^{\frac{k(k+1)}{2}}b^{\frac{k(k-1)}{2}}, \quad |ab| < 1.
\end{align*}
Two special cases of $f(a,b)$ are given by
\begin{align}
	\phi(q) &:= f(q,q) = \sum_{k=-\infty}^\infty q^{k^2}=1+2\sum_{n\geq 1}q^{n^2},\label{varphi}
\end{align}
and
\begin{align*}
    \psi(q):=f(q,q^3)=\sum_{k=0}^\infty q^{k(k+1)/2}.
\end{align*}
We recall the following results.
\begin{lemma}\label{lem3}\textup{\cite[p. 40, Entries 25(i) and 25(ii)]{bcb3}}
We have
	\begin{align}\label{phi-2-dissect}
		\phi(q) = \phi(q^4) + 2q \psi (q^8).
	\end{align}
\end{lemma}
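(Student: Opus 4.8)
The plan is to derive the $2$-dissection directly from the series definition of $\phi(q)$ recorded in \eqref{varphi}, by separating the summation index according to its parity. Starting from $\phi(q) = \sum_{k=-\infty}^{\infty} q^{k^2}$, I would split the bilateral sum into its even-indexed and odd-indexed subsums and identify each piece with a known theta function.

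For the even part, writing $k = 2m$ gives $k^2 = 4m^2$, so that $\sum_{m=-\infty}^{\infty} q^{4m^2} = \phi(q^4)$, where I have simply applied \eqref{varphi} with $q$ replaced by $q^4$. For the odd part, writing $k = 2m+1$ gives $k^2 = 4m(m+1) + 1$, so this contribution equals $q\sum_{m=-\infty}^{\infty} q^{4m(m+1)}$.

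The key step is then to identify this remaining bilateral sum with $2\psi(q^8)$. To do so, I would exploit the involution $m \mapsto -m-1$, which fixes the quantity $m(m+1)$ and has no fixed point among the integers (since $2m=-1$ is impossible); hence each value of $4m(m+1)$ is attained by exactly two indices, and the bilateral sum collapses to $2\sum_{m\geq 0} q^{4m(m+1)}$. Because $4m(m+1) = 8\cdot \tfrac{m(m+1)}{2}$, this unilateral sum is precisely $\psi(q^8)$ by the defining series for $\psi$. Combining the two contributions yields $\phi(q) = \phi(q^4) + 2q\psi(q^8)$, as claimed.

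I expect the only genuine subtlety to be the symmetry argument that converts the bilateral theta sum arising from the odd indices into the unilateral sum defining $\psi$; everything else is routine index manipulation and substitution. Since this identity is classical, appearing as Entries 25(i)--(ii) of Berndt \cite{bcb3}, one may of course simply cite it, but the self-contained elementary derivation sketched above is short enough that it could equally well be written out in full.
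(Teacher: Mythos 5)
Your derivation is correct and complete. Note, however, that the paper offers no proof of this lemma at all: it simply cites Berndt's \emph{Ramanujan's Notebooks, Part III}, p.~40, Entries 25(i) and 25(ii), and uses the identity as a known black box. Your argument is the standard elementary one: splitting $\sum_{k} q^{k^2}$ by the parity of $k$ gives $\phi(q^4)$ from the even indices, and for the odd indices the computation $(2m+1)^2 = 4m(m+1)+1$ together with the fixed-point-free involution $m \mapsto -m-1$ (which preserves $m(m+1)$ and pairs each nonnegative $m$ with a unique negative index) collapses the bilateral sum to $2\sum_{m \ge 0} (q^8)^{m(m+1)/2} = 2\psi(q^8)$. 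All steps check out against the definitions \eqref{varphi} and $\psi(q) = \sum_{k \ge 0} q^{k(k+1)/2}$ given in the paper. What the citation buys the authors is brevity; what your self-contained proof buys is independence from the reference, at the cost of a few lines. Either is acceptable here, and your write-up contains no gaps.
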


\begin{theorem}\cite[Theorem 2.4]{HSellers}\label{hs-1}
We have
    \begin{align*}
        \sum_{n\geq0}\overline{p}_{o}(n)q^n = \phi(q)\phi(q^2)\phi(q^4)^2\phi(q^8)^4\cdots
    \end{align*}
\end{theorem}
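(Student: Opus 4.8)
The plan is to rewrite the generating function for $\op_o$ in terms of the theta function $\phi$ and then to manufacture the infinite product by iterating a single quadratic functional equation. First I would recall, from the $k=1$ case recorded in the introduction, that
\[
\sum_{n\ge 0}\op_o(n)q^n=\frac{f_2^3}{f_1^2 f_4}.
\]
Using the standard Jacobi-triple-product evaluations $\phi(q)=\dfrac{f_2^5}{f_1^2 f_4^2}$ and $\phi(-q^2)=\dfrac{f_2^2}{f_4}$, a one-line check gives $\dfrac{\phi(q)}{\phi(-q^2)}=\dfrac{f_2^3}{f_1^2 f_4}$, so the generating function equals $\phi(q)/\phi(-q^2)$. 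Hence, after dividing the claimed identity by $\phi(q)$, reindexing the product, and substituting $Q=q^2$, the theorem is equivalent to the single product identity
\[
\frac{1}{\phi(-Q)}=\prod_{i\ge 0}\phi\!\left(Q^{2^i}\right)^{2^i},
\]
since $\phi(q^2)\phi(q^4)^2\phi(q^8)^4\cdots=\prod_{i\ge 0}\phi(q^{2^{i+1}})^{2^i}=\prod_{i\ge 0}\phi((q^2)^{2^i})^{2^i}$.

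The engine for this reduced identity is the classical duplication formula $\phi(q)\phi(-q)=\phi(-q^2)^2$, which follows immediately from the product forms above (and which one may also confirm as a theta-function identity). Setting $V(q):=1/\phi(-q)$ and dividing the duplication formula through by $\phi(-q)\phi(-q^2)^2$ recasts it as the functional equation
\[
V(q)=\phi(q)\,V(q^2)^2 .
\]
Iterating this $N$ times yields
\[
V(q)=\left(\prod_{i=0}^{N-1}\phi\!\left(q^{2^i}\right)^{2^i}\right)V\!\left(q^{2^N}\right)^{2^N},
\]
and I would then pass to the limit $N\to\infty$ to obtain $V(q)=\prod_{i\ge 0}\phi(q^{2^i})^{2^i}$, which is exactly the reduced identity (with $q$ renamed $Q$). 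Multiplying back by $\phi(q)$, with $Q=q^2$, reproduces $\phi(q)\phi(q^2)\phi(q^4)^2\phi(q^8)^4\cdots$, as required.

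The only genuinely delicate step is justifying the limit, i.e.\ the $q$-adic convergence of the tail factor. Since $V(Q)=1+2Q+\cdots$, one has $V(q^{2^N})^{2^N}=1+O(q^{2^N})$, whose first nonconstant term has order $2^N\to\infty$; thus the tail tends to $1$ and the truncated products stabilise coefficient-by-coefficient, so the infinite product is well defined as a formal power series and equals $V(q)$. This convergence bookkeeping is the part I would write out carefully; the rest is routine theta-function algebra.

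As an independent verification I would also carry out the direct eta-quotient computation: substituting $\phi(q^{2^j})=\dfrac{f_{2^{j+1}}^5}{f_{2^j}^2 f_{2^{j+2}}^2}$ into $\phi(q)\prod_{j\ge 1}\phi(q^{2^j})^{2^{j-1}}$ and collecting the net exponent of each $f_{2^m}$. The three contributions coming from consecutive factors telescope, leaving exponents $-2,3,-1$ for $m=0,1,2$ and exactly $0$ for every $m\ge 3$; this recovers $f_2^3/(f_1^2 f_4)$ and confirms the identity by a second route.
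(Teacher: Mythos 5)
The paper contains no proof of this statement to compare against: it is imported verbatim from Hirschhorn and Sellers \cite{HSellers}, so the only question is whether your blind proof is sound, and it is. Every step checks out: $\phi(q)=f_{2}^5/(f_{1}^2f_{4}^2)$ and $\phi(-q^2)=f_{2}^2/f_{4}$ do give $\sum_{n\geq 0}\op_o(n)q^n=f_{2}^3/(f_{1}^2f_{4})=\phi(q)/\phi(-q^2)$; the duplication formula $\phi(q)\phi(-q)=\phi(-q^2)^2$ correctly recasts as $V(q)=\phi(q)V(q^2)^2$ for $V(q)=1/\phi(-q)$; and your tail estimate $V(q^{2^N})^{2^N}=1+O(q^{2^N})$ is precisely the justification needed for coefficientwise stabilisation of the truncated products, so the limit is legitimate as a formal power series identity. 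This iteration of the duplication formula is essentially the classical route taken in the cited Hirschhorn--Sellers paper itself, so your argument matches the original proof rather than diverging from it; your telescoping eta-quotient check (net exponent $-2e_m+5e_{m-1}-2e_{m-2}=0$ for $m\geq 3$, with $e_0=1$, $e_j=2^{j-1}$) is a correct and welcome independent confirmation.
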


Some known 2-, 3-dissections (see for example \cite[Lemmas 2 and 3]{matching}) are stated in the following lemma, which will be used subsequently.
\begin{lemma}\label{lem-diss}
We have
\begin{align}
f_{1}^2 &= \frac{f_{2}f_{8}^5}{f_{4}^2f_{16}^2} - 2 q \frac{f_{2} f_{16}^2}{f_{8}},\label{disf1^2}\\
\frac{1}{f_{1}^2} &= \frac{f_{8}^5}{f_{2}^5 f_{16}^2} + 2 q \frac{f_{4}^2 f_{16}^2 }{f_{2}^5 f_{8}},\label{dis1byf1^2}\\
f_{1}^4 &= \frac{f_{4}^{10}}{f_{2}^2f_{8}^4} - 4 q \frac{f_{2}^2 f_{8}^4}{f_{4}^2},\label{disf1^4}\\
\frac{1}{f_{1}^4} &= \frac{f_{4}^{14}}{f_{2}^{14} f_{8}^4} + 4 q \frac{f_{4}^2 f_{8}^4}{f_{2}^{10}},\label{dis1byf1^4}\\
f_{1}f_{2} &= \frac{f_{6}f_{9}^4}{f_{3}f_{18}^2} - qf_{9}f_{18} - 2q^2 \frac{f_{3}f_{18}^4}{f_{6}f_{9}^2},\label{disf1f2}\\
f_{1}^3 &= \frac{f_{6}f_{9}^6}{f_{3}f_{18}^3}-3qf_{9}^3+4q^3 \frac{f_{3}^2f_{18}^6}{f_{6}^2f_{9}^3}\label{disf1^3}.
\end{align}
\end{lemma}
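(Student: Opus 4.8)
The plan is to treat the four $2$-dissections \eqref{disf1^2}--\eqref{dis1byf1^4} and the two $3$-dissections \eqref{disf1f2}--\eqref{disf1^3} by different means, since the first group reduces to elementary substitution while the second requires genuine theta-series identification. For the $2$-dissections I would build everything out of the single input \eqref{phi-2-dissect} of Lemma \ref{lem3}, namely $\phi(q)=\phi(q^4)+2q\psi(q^8)$, together with the standard eta-quotient evaluations $\phi(q)=f_2^5/(f_1^2f_4^2)$, $\psi(q)=f_2^2/f_1$, $\phi(-q)=f_1^2/f_2$ and $\phi(-q^2)=f_2^2/f_4$. Replacing $q$ by $-q$ in \eqref{phi-2-dissect} gives $\phi(-q)=\phi(q^4)-2q\psi(q^8)$; multiplying by $f_2$ and using $f_1^2=f_2\,\phi(-q)$ produces \eqref{disf1^2} at once. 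For \eqref{dis1byf1^2} I would write $1/f_1^2=1/(f_2\phi(-q))$ and clear the denominator using the product rule $\phi(q)\phi(-q)=\phi(-q^2)^2$, so that $1/\phi(-q)=\phi(q)/\phi(-q^2)^2$; substituting \eqref{phi-2-dissect} and the eta forms yields the stated dissection after cancellation.

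The quartic cases \eqref{disf1^4} and \eqref{dis1byf1^4} run the same way through the companion dissections $\phi(\pm q)^2=\phi(q^2)^2\pm 4q\,\psi(q^4)^2$ (equivalently $\phi(q)^2+\phi(-q)^2=2\phi(q^2)^2$ and $\phi(q)^2-\phi(-q)^2=8q\psi(q^4)^2$), using $\phi(q^2)=f_4^5/(f_2^2f_8^2)$ and $\psi(q^4)=f_8^2/f_4$. Since $f_1^4=f_2^2\phi(-q)^2$, the minus version gives \eqref{disf1^4} directly, while $1/f_1^4=\phi(q)^2/(f_2^2\,\phi(-q^2)^4)$ combined with the plus version gives \eqref{dis1byf1^4}. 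Each of these is a one-line substitution followed by cancellation of eta-quotients, so I expect no difficulty in this group.

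For the $3$-dissection \eqref{disf1^3} of $f_1^3$ I would start from Jacobi's identity $f_1^3=\sum_{n\geq 0}(-1)^n(2n+1)q^{n(n+1)/2}$ and split the triangular-number exponents $n(n+1)/2$ by their residue modulo $3$. A short check shows $n(n+1)/2\equiv 1\pmod 3$ exactly when $n\equiv 1\pmod 3$; writing $n=3m+1$ and factoring out the common $-3q$ collapses this class back into Jacobi's identity at argument $q^9$, giving precisely the middle term $-3qf_9^3$. The remaining contribution (from $n\equiv 0,2\pmod 3$) has exponents divisible by $3$ and must be recognized as $\frac{f_6f_9^6}{f_3f_{18}^3}+4q^3\frac{f_3^2f_{18}^6}{f_6^2f_9^3}$. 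For \eqref{disf1f2} I would instead expand each factor by the pentagonal number theorem, $f_1=\sum_j(-1)^jq^{j(3j-1)/2}$ and likewise for $f_2$, multiply the two series, and collect the exponents of the product modulo $3$ into the three displayed pieces.

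The main obstacle is this final identification step in the two $3$-dissections: once a residue class has been isolated as a (one- or two-variable) theta series, one must match it to the displayed eta-quotient. This is where the Jacobi triple product does the real work, rewriting each collected sum as an infinite product and then translating that product into the $f_k$ notation; the bookkeeping of exponents and signs across the three classes, rather than any conceptual difficulty, is the delicate part. Since \eqref{disf1f2} and \eqref{disf1^3} are in any case classical and are quoted here from \cite{matching}, one may simply cite them, but the route above reproduces them from the Jacobi triple product alone.
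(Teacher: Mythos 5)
The paper offers no proof of this lemma at all: it is quoted verbatim from Baruah and Das \cite[Lemmas 2 and 3]{matching}, so any derivation you give is by definition a different route. Your treatment of the four $2$-dissections \eqref{disf1^2}--\eqref{dis1byf1^4} is complete and correct: the eta-quotient forms $\phi(q)=f_2^5/(f_1^2f_4^2)$, $\psi(q)=f_2^2/f_1$, $\phi(-q)=f_1^2/f_2$, $\phi(-q^2)=f_2^2/f_4$ are standard, the product rule $\phi(q)\phi(-q)=\phi(-q^2)^2$ and the companion dissections $\phi(\pm q)^2=\phi(q^2)^2\pm 4q\psi(q^4)^2$ are Entry 25 material from the same source \cite{bcb3} as the paper's Lemma \ref{lem3}, and I have checked that each substitution does cancel to the stated quotient. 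This half of your proposal is a genuine value-add over the paper, which proves nothing here; the only caveat is that the paper states just \eqref{phi-2-dissect} from Entry 25, so you should cite the squared identities explicitly rather than treat them as consequences of Lemma \ref{lem3} alone.

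The $3$-dissection half, however, has a real gap. Your isolation of the middle term is fine: for $n=3m+1$ one gets $(-1)^{3m+1}(6m+3)q^{1+9m(m+1)/2}$, which sums to $-3qf_9^3$ exactly as you say. But the claim that the Jacobi triple product then "does the real work" on the remaining classes fails. Writing $n=3m$ and $n=3m+2$ and sending $m\mapsto -m-1$ in the second class, the leftover contribution assembles into the single bilateral sum $\sum_{m\in\mathbb{Z}}(-1)^m(6m+1)\,q^{3m(3m+1)/2}$. Because of the linear weight $6m+1$ this is \emph{not} a theta series, and the triple product cannot convert it into an infinite product; proving that it equals $\frac{f_6f_9^6}{f_3f_{18}^3}+4q^3\frac{f_3^2f_{18}^6}{f_6^2f_9^3}$ is precisely the nontrivial content of \eqref{disf1^3} and requires an additional device (a further dissection of the weighted series, quintuple-product or Hirschhorn-style arguments, as in \cite{matching}). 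The same objection applies to \eqref{disf1f2}: multiplying two pentagonal series and sorting exponents mod $3$ leaves two-variable quadratic-form sums whose identification with the three displayed quotients is the actual work, not bookkeeping. Since you explicitly allow falling back on citing \cite{matching} for these two identities — which is exactly what the paper does — your proposal is acceptable as it stands, but as a self-contained proof of \eqref{disf1f2} and \eqref{disf1^3} it is incomplete at just the step you flagged as delicate.
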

The next lemma gives a $p$-dissection of $f_{1}$.  

\begin{lemma}\cite[Theorem 2.2]{CUI2013507}\label{pdis}
	For a prime $p > 3$, we have 
	\begin{align*}
		f_{1} = (-1)^{\frac{\pm p-1}{6}} q^{\frac{p^2 - 1}{24}} f_{p^2} + \sum_{k \neq \frac{\pm p-1}{6}, k = - \frac{p-1}{2} }^{\frac{p-1}{2}} (-1)^k q^{\frac{3k^2 + k}{2}} f\Bigg(-q^{\frac{3p^2+(6k+1)p}{2}},-q^{\frac{3p^2-(6k+1)p}{2}}\Bigg),
	\end{align*} 
	where 
	\begin{center}
		$\dfrac{\pm p-1}{6}=$ 
		$\begin{cases}
			\dfrac{p-1}{6}, & if ~p\equiv~ 1~ \pmod{6},\\
			\dfrac{-p-1}{6}, & if ~p\equiv~ -1~ \pmod{6}.
		\end{cases}$
	\end{center}
 Furthermore, for $\dfrac{-(p-1)}{2} \leq k \leq \dfrac{p-1}{2}$ and $k \neq \dfrac{\pm p-1}{6}$,
	\begin{align*}
		\dfrac{3k^2 + k}{2} \not\equiv \dfrac{p^2 - 1}{24} \pmod{p}.
		\end{align*}
	\end{lemma}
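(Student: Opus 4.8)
The plan is to derive this dissection directly from Euler's pentagonal number theorem,
\[
f_{1} = (q;q)_\infty = \sum_{j=-\infty}^{\infty} (-1)^j q^{\frac{3j^2+j}{2}},
\]
by sorting the summation index $j$ according to its residue modulo $p$. First I would write every integer uniquely as $j = pm + k$, where $m \in \mathbb{Z}$ and $k$ runs over the balanced residue system $-\frac{p-1}{2} \leq k \leq \frac{p-1}{2}$. Substituting and expanding gives
\[
\frac{3j^2+j}{2} = \frac{3k^2+k}{2} + \frac{pm(3pm + 6k + 1)}{2},
\]
while, because $p$ is odd, the sign factors as $(-1)^j = (-1)^k (-1)^m$. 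Thus the contribution of each residue class $k$ is $(-1)^k q^{(3k^2+k)/2}$ times an inner sum over $m$.

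The second step is to recognize each inner sum as a Ramanujan theta function. Matching the exponent coming from $f(-q^A,-q^B) = \sum_m (-1)^m q^{(A+B)m^2/2 + (A-B)m/2}$ against $\frac{3p^2 m^2 + (6k+1)p m}{2}$ forces $A+B = 3p^2$ and $A-B = (6k+1)p$, which pins down $A = \frac{3p^2+(6k+1)p}{2}$ and $B = \frac{3p^2-(6k+1)p}{2}$, exactly the arguments appearing in the statement. This already reproduces the entire sum on the right-hand side, so the only remaining task is to single out the one class that degenerates to a pure power of $q^{p^2}$.

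That distinguished class is $k = \frac{\pm p-1}{6}$, the unique value of $k$ in the balanced range with $6k+1 = \pm p$. For this $k$ the two theta arguments collapse to the set $\{p^2, 2p^2\}$, so with $Q = q^{p^2}$ the inner factor becomes $f(-Q,-Q^2)$, which by the pentagonal number theorem equals $(Q;Q)_\infty = f_{p^2}$; a short computation then checks $\frac{3k^2+k}{2} = \frac{p^2-1}{24}$, yielding the leading term $(-1)^{\frac{\pm p-1}{6}} q^{\frac{p^2-1}{24}} f_{p^2}$.

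Finally, to prove the accompanying congruence I would complete the square. Since $p>3$, clearing denominators is legitimate modulo $p$, and $\frac{3k^2+k}{2} \equiv \frac{p^2-1}{24} \pmod{p}$ is equivalent to $(6k+1)^2 \equiv 0 \pmod{p}$, i.e.\ $6k \equiv -1 \pmod{p}$. In the balanced range this is solved only by $k = \frac{\pm p-1}{6}$, so every other $k$ places its exponent in a different residue class modulo $p$, which is precisely what certifies that the dissection is clean. I expect the main obstacle to be organizational rather than conceptual: keeping the sign conventions and the two half-exponents $A,B$ consistent through the substitution, and verifying the degeneracy $f(-Q,-Q^2)=f_{p^2}$ together with the identity $\frac{3k^2+k}{2}=\frac{p^2-1}{24}$ for the special class without arithmetic slips, since the whole argument rests on the single idea of partitioning the pentagonal numbers by residue.
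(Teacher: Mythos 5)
Your proof is correct and complete: splitting the pentagonal-number index $j=pm+k$ over the balanced residue system, matching the inner sums to $f(-q^A,-q^B)$, identifying the degenerate class $6k+1=\pm p$ via $f(-Q,-Q^2)=(Q;Q)_\infty$, and reducing the exponent congruence to $(6k+1)^2\equiv 0\pmod p$ are all sound. The paper does not prove this lemma at all --- it is quoted from Cui and Gu --- and your argument is essentially the standard derivation given in that cited source, so there is nothing to flag.
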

The following lemma gives a $p$-dissection of $f_{1}^3$.
\begin{lemma}\cite[Lemma 2.3]{zakirbaruah}
    If $p\geq3$ is a prime, then
    \begin{align}\label{f13dis}
        f_{1}^3=\sum_{k\neq\frac{p-1}{2}, k=0}^{p-1} (-1)^{k} q^{\frac{k(k+1)}{2}} \sum_{n=0}^{\infty} (-1)^n (2pn+2k+1) q^{pn\cdot\frac{pn+2k+1}{2}} + p (-1)^{\frac{p-1}{2}} q^{\frac{p^2-1}{8}} f_{p^2}^3.
    \end{align}
    Furthermore, if $k\neq\dfrac{p-1}{2}, 0\leq k \leq p-1$, then
    \begin{align*}
        \frac{k^2+k}{2} \not\equiv \frac{p^2-1}{8} \pmod{p}.
    \end{align*}
    
\end{lemma}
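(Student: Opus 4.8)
The plan is to start from Jacobi's classical identity
\[
f_{1}^3 = (q;q)_\infty^3 = \sum_{m=0}^{\infty} (-1)^m (2m+1)\, q^{m(m+1)/2},
\]
and to sort the summation index $m$ according to its residue class modulo $p$. Writing $m = pn+k$ with $0 \le k \le p-1$ and $n \ge 0$ gives a bijection between nonnegative integers $m$ and pairs $(n,k)$ in this range, so the single sum splits as a double sum indexed by $k$ (outer) and $n$ (inner).

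Next I would simplify each summand under this substitution. Since $p \ge 3$ is odd, $(-1)^{pn+k} = (-1)^n(-1)^k$; the linear factor becomes $2m+1 = 2pn+2k+1$; and expanding
\[
\frac{(pn+k)(pn+k+1)}{2} = \frac{k(k+1)}{2} + pn\cdot\frac{pn+2k+1}{2}
\]
separates the exponent into a $k$-dependent shift $q^{k(k+1)/2}$ and the remaining factor $q^{pn(pn+2k+1)/2}$. Collecting everything yields
\[
f_{1}^3 = \sum_{k=0}^{p-1} (-1)^k q^{\frac{k(k+1)}{2}} \sum_{n=0}^{\infty} (-1)^n (2pn+2k+1)\, q^{pn\cdot\frac{pn+2k+1}{2}},
\]
which is \eqref{f13dis} before the special term has been isolated.

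The one term deserving separate treatment is $k = \frac{p-1}{2}$, and extracting it cleanly is the crux of the argument. For this value $2k+1 = p$, so $2pn+2k+1 = p(2n+1)$ and $pn\cdot\frac{pn+2k+1}{2} = \frac{p^2 n(n+1)}{2}$; the inner sum therefore becomes $p\sum_{n\ge 0} (-1)^n (2n+1) q^{p^2 n(n+1)/2}$, which is precisely $p\,f_{p^2}^3$ by a second application of Jacobi's identity with $q$ replaced by $q^{p^2}$. The accompanying prefactor $(-1)^{(p-1)/2} q^{(p^2-1)/8}$ (using $\frac{k(k+1)}{2} = \frac{p^2-1}{8}$ at $k = \frac{p-1}{2}$) then reassembles this contribution as the isolated term $p(-1)^{\frac{p-1}{2}} q^{\frac{p^2-1}{8}} f_{p^2}^3$, completing \eqref{f13dis}.

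Finally, for the incongruence I would clear denominators: multiplying $\frac{k^2+k}{2} \equiv \frac{p^2-1}{8} \pmod p$ by $8$ and using $p^2 \equiv 0 \pmod p$ turns it into $(2k+1)^2 \equiv 0 \pmod p$, hence $2k+1 \equiv 0 \pmod p$. For $0 \le k \le p-1$ this forces $k = \frac{p-1}{2}$, so the congruence fails for every other residue, as claimed. I expect no serious obstacle here; the only points requiring care are the parity reduction $(-1)^{pn} = (-1)^n$ (valid because $p$ is odd) and the clean identification of the $k=\frac{p-1}{2}$ term with $f_{p^2}^3$.
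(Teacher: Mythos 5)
Your proof is correct: splitting Jacobi's identity $f_1^3=\sum_{m\geq 0}(-1)^m(2m+1)q^{m(m+1)/2}$ by writing $m=pn+k$, isolating $k=\frac{p-1}{2}$ to recover $p(-1)^{\frac{p-1}{2}}q^{\frac{p^2-1}{8}}f_{p^2}^3$, and reducing the incongruence to $(2k+1)^2\equiv 0\pmod p$ are all sound. The paper itself does not prove this lemma but cites it from Ahmed and Baruah, where it is established by exactly this standard $p$-dissection of Jacobi's identity, so your argument matches the source's approach.
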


\section{Proofs of Theorems \ref{thm:sellers-1}, \ref{thm:sellers-2} and \ref{thm:sellers-3}}\label{sec:sellers-1}

We now provide proofs of Theorems \ref{thm:sellers-1}--\ref{thm:sellers-3} via elementary generating function manipulations.

\begin{proof}[Proof of Theorem \ref{thm:sellers-1}]
From \eqref{opt}, we recall
\begin{align}\label{n}
        \sum_{n\geq 0}\opt(n)q^n &=\frac{f_{2}^9}{f_{1}^6f_{4}^3}=  \frac{f_{2}^9}{f_{4}^3} \Bigg(\frac{f_{4}^{14}}{f_{2}^{14} f_{8}^4} + 4 q \frac{f_{4}^2 f_{8}^4}{f_{2}^{10}}\Bigg)\Bigg(\frac{f_{8}^5}{f_{2}^5 f_{16}^2} + 2 q \frac{f_{4}^2 f_{16}^2 }{f_{2}^5 f_{8}}\Bigg),
\end{align}
where we  have employed \eqref{dis1byf1^2} and \eqref{dis1byf1^4}. Now, working modulo $4$, we have
\begin{align}
   \sum_{n\geq 0}\opt(n)q^n & \equiv \frac{f_{2}^7}{f_{4}^3} \Bigg(\frac{f_{8}^5}{f_{2}^5 f_{16}^2} + 2 q \frac{f_{4}^2 f_{16}^2 }{f_{2}^5 f_{8}}\Bigg) \pmod{4}.\label{opt(n)}
\end{align}

Extracting the terms involving $q^{2n+1}$, dividing both sides by $q$ and replacing $q^2$ by $q$, we have
\begin{align}
    \sum_{n\geq 0}\opt(2n+1)q^n &\equiv 2\frac{f_{1}^2 f_{8}^2}{f_{2}f_{4}} \equiv 2 f_{2}^6 \pmod{4}\label{opt(2n+14)}.
\end{align}
Extracting the terms involving $q^{2n+1}$, we have
\begin{align}\label{sel-1}
    \opt(4n+3) \equiv 0 \pmod{4}.
\end{align}
This proves \eqref{eq:sellers-1} for the case $\alpha=0$.

Now, extracting the terms involving $q^{2n}$ from \eqref{opt(n)} and replacing $q^2$ by $q$, we have 
\begin{align}\label{2n4}
     \sum_{n\geq 0}\opt(2n)q^n &\equiv \frac{f_{1}^2f_{4}^5}{f_{2}^3f_{8}^2}
     \pmod{4}.
\end{align}

From equations \eqref{opt} and \eqref{2n4} we can conclude via elementary simplifications that, for all $n\geq 0$,
\begin{equation}\label{sel-3}
    \opt (2n) \equiv \opt(n)\pmod{4}.
\end{equation}
Finally, combining equations \eqref{sel-3} with \eqref{sel-1}, we have \eqref{eq:sellers-1}.
\end{proof}

\begin{proof}[Proof of Theorem \ref{thm:sellers-2}]
    Extracting the terms involving $q^{2n+1}$ from \eqref{n}, dividing both sides by $q$ and replacing $q^2$ by $q$ and working modulo 8, we have
\begin{align*}
     \sum_{n\geq 0}\opt(2n+1)q^n &\equiv 2\frac{f_{2}f_{8}^2}{f_{4}f_{1}^2} + 4 f_{4}f_{8}\equiv 2\frac{f_{2}f_{8}^2}{f_{4}}\Bigg(\frac{f_{8}^5}{f_{2}^5 f_{16}^2} + 2 q \frac{f_{4}^2 f_{16}^2 }{f_{2}^5 f_{8}}\Bigg) + 4 f_{4}f_{8} \pmod{8},
\end{align*}
thanks to (\ref{dis1byf1^2}).
Extracting the terms involving $q^{2n}$ and replacing $q^2$ by $q$, we have
\begin{align}
     \sum_{n\geq 0}\opt(4n+1)q^n &\equiv 2\frac{f_{4}^7}{f_{1}^4f_{2}f_{8}^2} + 4 f_{2}f_{4}\equiv 2\frac{f_{4}^7}{f_{2}f_{8}^2}\Bigg(\frac{f_{4}^{14}}{f_{2}^{14} f_{8}^4} + 4 q \frac{f_{4}^2 f_{8}^4}{f_{2}^{10}}\Bigg) + 4 f_{2}f_{4} \pmod{8}, \label{4n+18}
\end{align}
where in the last step, we  have employed \eqref{dis1byf1^4}.

Extracting the terms involving $q^{2n+1}$, dividing both sides by $q$ and replacing $q^2$ by $q$, we have
\begin{align*}
     \sum_{n\geq 0}\opt(8n+5)q^n &\equiv 8 \frac{f_{2}^9f_{4}^2}{f_{1}^{11}}\pmod{8}.
\end{align*}
Hence,
\begin{align}\label{eq:sel-21}
    \opt(8n+5) \equiv 0 \pmod{8}.
\end{align}
This proves \eqref{eq:sellers-2} for $\alpha=0$.

Now, extracting the terms involving $q^{2n}$ from \eqref{n}, replacing $q^2$ by $q$ and working modulo 8, we have
\begin{align}\label{eq:ext}
        \sum_{n\geq 0}\opt(2n)q^n &\equiv \frac{f_{2}^{11}f_{4}}{f_{1}^{10}f_{8}^2}\equiv 
        \frac{f_{2}^{7}f_{4}}{f_{1}^{2}f_{8}^2}\equiv
        \frac{f_{2}^7f_{4}}{f_{8}^2} \Bigg(\frac{f_{8}^5}{f_{2}^5 f_{16}^2} + 2 q \frac{f_{4}^2 f_{16}^2 }{f_{2}^5 f_{8}}\Bigg) \pmod{8}
\end{align}  
via (\ref{dis1byf1^2}).
Taking the odd dissection from the above, we obtain
\begin{eqnarray*}
\sum_{n\geq 0}\opt(4n+2)q^n 
& \equiv & 
2\frac{f_1^7f_2}{f_4^2}\cdot \frac{f_2^2f_{8}^2}{f_1^5f_4}  \equiv  
2\frac{f_1^2f_2^3f_8^2}{f_4^3} \pmod{8} \\
& \equiv & 
2\left( \frac{f_2f_8^5}{f_4^2f_{16}^2} - 2q\frac{f_2f_{16}^2}{f_8} \right)\frac{f_2^3f_8^2}{f_4^3} \pmod{8} 
\end{eqnarray*}
by \eqref{disf1^2}. 

Now we take the even dissection of the above to obtain 
\begin{eqnarray*}
\sum_{n\geq 0}\opt(8n+2)q^n 
& \equiv & 
2 \frac{f_1f_4^5}{f_2^2f_{8}^2} \cdot \frac{f_1^3f_4^2}{f_2^3}  \equiv  
2 \frac{f_1^4f_4^7}{f_2^5f_{8}^2}  \equiv 
2 \frac{f_2^2f_4^7}{f_2^5f_{8}^2}\equiv 
2 \frac{f_4^7}{f_2^3f_{8}^2} \pmod{8}.  \\
\end{eqnarray*}
This last expression is an even function of $q$.  Thus, for all $n\geq 0$, we know 
\begin{equation}\label{eqn:16n10mod8}
\opt(8(2n+1)+2) = \opt(16n+10)\equiv 0 \pmod{8}.
\end{equation}
This proves the $\alpha =1$ case of \eqref{eq:sellers-2}.

Extracting the terms involving even powers of $q$ from \eqref{eq:ext}, we have
\begin{align}\label{eq:p11}
        \sum_{n\geq 0}\opt(4n)q^n &\equiv \frac{f_{1}^{2}f_{2}f_{4}^3}{f_{8}^{2}}\equiv \frac{f_{2}f_{4}^3}{f_{8}^{2}} \Bigg(\frac{f_{2}f_{8}^5}{f_{4}^2f_{16}^2} - 2 q \frac{f_{2} f_{16}^2 }{f_{8}}\Bigg) \pmod{8}.
\end{align}
Extracting the terms involving $q^{2n+1}$, dividing both sides by $q$ and replacing $q^2$ by $q$, we have
\begin{align*}
      \sum_{n\geq 0}\opt(8n+4)q^n &\equiv -2\frac{f_{1}^{2}f_{2}^3f_{8}^2}{f_{4}^{3}}\equiv -2\frac{f_{2}^3f_{8}^2}{f_{4}^{3}} \Bigg(\frac{f_{2}f_{8}^5}{f_{4}^2f_{16}^2} - 2 q \frac{f_{2} f_{16}^2 }{f_{8}}\Bigg) \pmod{8}.
\end{align*}
Extracting the terms involving $q^{2n}$ from \eqref{n} and replacing $q^2$ by $q$, we have
\begin{align*}
     \sum_{n\geq 0}\opt(16n+4)q^n &\equiv -2\frac{f_{1}^{4}f_{4}^7}{f_{2}^{5}f_{8}^2}\equiv -2\frac{f_{4}^7}{f_{2}^{5}f_{8}^2} \Bigg(\frac{f_{4}^{10}}{f_{2}^2f_{8}^4} - 4 q \frac{f_{2}^2 f_{8}^4}{f_{4}^2}\Bigg)\pmod{8},
\end{align*}
where in the last step, we  have employed \eqref{disf1^4}.

Extracting the terms involving $q^{2n+1}$, dividing both sides by $q$ and replacing $q^2$ by $q$, we have
\begin{align*}
     \sum_{n\geq 0}\opt(32n+20)q^n &\equiv 8\frac{f_{2}^{5}f_{4}^2}{f_{1}^{3
}}\pmod{8}.
\end{align*}
Hence, for all $n\geq 0,$
\begin{align}\label{eq:sel-22}
    \opt(32n+20) \equiv 0 \pmod{8}.
\end{align}
This proves \eqref{eq:sellers-2} for $\alpha=2$.

Lastly, from equation \eqref{eq:p11} we have
\begin{align}
    \sum_{n\geq 0}\opt(8n)q^{2n} \equiv \frac{f_2^2f_4^3f_8^5}{f_8^2f_4^2f_{16}^2} \pmod 8 
\end{align} 
which means
\begin{align}
    \sum_{n\geq 0} \opt(8n)q^n\equiv \frac{f_{1}^{2}f_{2}f_{4}^3}{f_{8}^{2}} \pmod 8,
\end{align}
and thus we can conclude that, for all $n\geq 0$
\begin{equation}\label{eq:sel-23}
    \opt(8n)\equiv \opt(4n) \pmod 8
\end{equation}
thanks to (\ref{eq:p11}).
Combining equation \eqref{eq:sel-23} with \eqref{eq:sel-21}, \eqref{eqn:16n10mod8}, and \eqref{eq:sel-22}, we have \eqref{eq:sellers-2}.
\end{proof}
\begin{proof}[Proof of Theorem \ref{thm:sellers-3}]
    From \eqref{n}, we have
\begin{align*}
    \sum_{n=0}^{\infty}\opt(n)q^n &= \frac{f_{2}^9}{f_{1}^6f_{4}^3}\equiv \frac{f_{1}^{16}f_{2}}{f_{1}^6f_{4}^3}\equiv \frac{f_{2}}{f_{4}^3} \cdot f_{1}^2 \cdot \left(f_{1}^4\right)^2\\
    &\equiv \frac{f_{2}}{f_{4}^3}  \left(\frac{f_{4}^{10}}{f_{2}^2f_{8}^4} - 4 q \frac{f_{2}^2 f_{8}^4}{f_{4}^2}\right)^2 \left(\frac{f_{2}f_{8}^5}{f_{4}^2f_{16}^2} - 2 q \frac{f_{2} f_{16}^2 }{f_{8}}\right) \pmod{16}.
\end{align*}
Extracting the odd  powered terms of $q$, we have
\begin{align*}
    \sum_{n\geq 0}\opt(2n+1)q^n &\equiv -2\frac{f_2^{17}f_{8}^2}{f_1^2 f_4^9}-8\frac{f_1^2f_2^3f_4^5}{f_8^2}\pmod{16}. 
\end{align*}
With the aid of \eqref{dis1byf1^2}, we again extract the  odd powered terms of $q$ to arrive at
\begin{align*}
     \sum_{n\geq 0}\opt(4n+3)q^n &\equiv-4\frac{f_4 f_8^2}{f_2}\pmod{16}.
\end{align*}
Since the last congruence has no terms involving $q^{2n+1}$, we conclude that
\begin{align*}
    \opt(8n+7)\equiv0\pmod{16},
\end{align*}
which is the $\alpha=0$ case.

Extracting the terms involving even powers of $q$, we arrive at
\begin{align}
      \sum_{n\geq 0}\opt(2n)q^n &\equiv \frac{f_{2}^{15}}{f_{1}^2f_{4}^3f_{8}^2}\equiv \frac{f_{2}^{15}}{f_{4}^3f_{8}^2}  \Bigg(\frac{f_{8}^5}{f_{2}^5 f_{16}^2} + 2 q \frac{f_{4}^2 f_{16}^2 }{f_{2}^5 f_{8}}\Bigg) \pmod{16}.\label{2n16}
\end{align}
Extracting the terms involving $q^{2n+1}$, dividing both sides by $q$ and replacing $q^2$ by $q$, we have
\begin{align*}
      \sum_{n\geq 0}\opt(4n+2)q^n &\equiv 2\frac{f_{1}^2f_{2}^{3}f_{8}^2}{f_{4}^3}\equiv 2\frac{f_{2}^{3}f_{8}^2}{f_{4}^3} \Bigg(\frac{f_{2}f_{8}^5}{f_{4}^2f_{16}^2} - 2 q \frac{f_{2} f_{16}^2 }{f_{8}}\Bigg)\pmod{16}.
\end{align*}
Extracting the terms involving odd powers of $q$, we have
\begin{align}
      \sum_{n\geq 0}\opt(8n+6)q^n &\equiv -4\frac{f_{4}f_{8}^{2}}{f_{2}}\pmod{16}.
\end{align}
Extracting the terms involving $q^{2n+1}$, dividing both sides by $q$ and replacing $q^2$ by $q$, we have
\begin{align}\label{sel-31}
    \opt(16n+14) &\equiv 0 \pmod{16}.
\end{align}
This proves \eqref{eq:sellers-3} for $\alpha=1$.

Extracting the terms involving $q^{2n}$ from \eqref{2n16} and replacing $q^2$ by $q$, we have
\begin{align}
     \sum_{n\geq 0}\opt(4n)q^n &\equiv \frac{f_{1}^{10}f_{4}^{3}}{f_{2}^3f_{8}^2}\equiv \frac{f_{4}^{3}}{f_{2}^3f_{8}^2}\cdot(f_{1}^4)^2 \cdot f_{1}^2\label{eq:p13}\\&\equiv \frac{f_{4}^{3}}{f_{2}^3f_{8}^2}  \left(\frac{f_{4}^{10}}{f_{2}^2f_{8}^4} - 4 q \frac{f_{2}^2 f_{8}^4}{f_{4}^2}\right)^2 \left(\frac{f_{2}f_{8}^5}{f_{4}^2f_{16}^2} - 2 q \frac{f_{2} f_{16}^2 }{f_{8}}\right)\pmod{16}.\nonumber
\end{align}
Extracting the terms involving odd powers of $q$, we have
\begin{align*}
    \sum_{n\geq 0}\opt(8n+4)q^n &\equiv -8\frac{f_{2}^9f_{4}^3}{f_{1}^2f_{8}^2}-2\frac{f_{2}^{23}f_{8}^2}{f_{1}^6f_{4}^{11}}\\
    &\equiv  -8\frac{f_{2}^9f_{4}^3}{f_{2}f_{8}^2} - 2 \frac{f_{2}^{23}f_{8}^2}{f_{2}^4f_{4}^{11}}f_{1}^2\\
    &\equiv  -8\frac{f_{2}^8f_{4}^3}{f_{8}^2} - 2 \frac{f_{2}^{23}f_{8}^2}{f_{2}^4f_{4}^{11}}\Bigg(\frac{f_{2}f_{8}^5}{f_{4}^2f_{16}^2} - 2 q \frac{f_{2} f_{16}^2 }{f_{8}}\Bigg) \pmod{16}.
\end{align*}
Extracting the terms involving $q^{2n+1}$, dividing both sides by $q$ and replacing $q^2$ by $q$, we have
\begin{align*}
     \sum_{n\geq 0}\opt(16n+12)q^n &\equiv 4\frac{f_{1}^{20}f_{4}f_{8}^2}{f_{2}^{11}}\equiv 4\frac{f_{4}f_{8}^2}{f_{2}}\pmod{16}.
\end{align*}
Extracting the terms involving $q^{2n+1}$, dividing both sides by $q$ and replacing $q^2$ by $q$, we have
\begin{align}\label{sel-32}
    \opt(32n+28) \equiv 0 \pmod{16}.
\end{align}
This proves \eqref{eq:sellers-3} for $\alpha=2$.

From equation \eqref{eq:p13} with a little simplification we obtain
\begin{equation*}
    \sum_{n\geq 0}\opt(8n)q^{2n}\equiv \frac{f_4^{21}}{f_2^6f_8^5f_{16}^2} \pmod{16}.
\end{equation*}
Further simplifying and using \eqref{eq:p13} yields
\begin{equation}\label{sel-33}
    \opt(8n)\equiv \opt(4n) \pmod{16}.
\end{equation}
Combining equations \eqref{sel-33} with \eqref{sel-31} and \eqref{sel-32} we have \eqref{eq:sellers-3}.
\end{proof}

\section{Proof of Theorem \ref{thm:4}}\label{sec:inf}
\begin{proof}[Proof of \eqref{thminf4}]
From \eqref{opt(2n+14)}, we recall
\begin{align}
    \sum_{n\geq 0}\opt(2n+1)q^n  & \equiv 2 f_{2}^6\equiv 2f_{4}^3 \pmod{4}.\label{inf1mod4}
\end{align}
Employing \eqref{disf1^3} in the last step, extracting the terms involving $q^{3n+4}$, dividing both sides by $q^4$ and replacing $q^3$ by  $q$, we arrive at
\begin{align*}
      \sum_{n\geq 0}\opt(6n+9)q^n  & \equiv 2 f_{12}^3 \pmod{4}.
\end{align*}
Further extracting the terms involving $q^{3n}$ and replacing $q^{3}$ by $q$, we have
\begin{align}
     \sum_{n\geq 0}\opt(18n+9)q^n  & \equiv 2 f_{4}^3 \pmod{4}.\label{inf2mod4}
\end{align}
From \eqref{inf1mod4} and \eqref{inf2mod4}, we have
\begin{align*}
    \opt(2n+1) &\equiv \opt(18n+9)\\
    &\equiv \opt(9(2n+1))\\
    &\qquad\vdots\\
    &\equiv \opt(2\cdot 3^{2\alpha}n+ 3^{2\alpha})\pmod{4}
\end{align*}
for any $\alpha > 0$.
Hence for any $\alpha \geq 0$, we have
\begin{align}\label{induction1mod4}
    \sum_{n=0}^{\infty} \opt(2\cdot 3^{2\alpha}n+ 3^{2\alpha})q^n \equiv 2f_{4}^3 \pmod{4}.
\end{align}

Next, with the help of mathematical induction, we prove that if $p\geq3$ is a prime, then for $\delta\geq0$, we have
\begin{align}\label{infmod4}
        \sum_{n=0}^{\infty} \opt(2\cdot3^{2\alpha}\cdot p^{2\delta}n+3^{2\alpha}\cdot p^{2\delta}) q^n\equiv 2\cdot p^{\delta} (-1)^{\delta\left(\frac{p-1}{2}\right)}f_{4}^3\pmod{4},
    \end{align}
Clearly the case $\delta=0$ is  true by \eqref{induction1mod4}.

Using the $p$-dissection of $f_{1}^3$ stated in \eqref{f13dis}, we obtain
\begin{align*}
    &\sum_{n=0}^{\infty} \opt(2\cdot3^{2\alpha}\cdot p^{2\delta}n+3^{2\alpha}\cdot p^{2\delta}) q^n \notag\\&\equiv 2p^{\delta} (-1)^{\delta\left(\frac{p-1}{2}\right)}\Bigg[\sum_{k\neq\frac{p-1}{2}, k=0}^{p-1} (-1)^{k} q^{4\frac{k(k+1)}{2}} \sum_{0}^{\infty} (-1)^n (2pn+2k+1) q^{4pn\cdot\frac{pn+2k+1}{2}} \notag \\& \qquad + p (-1)^{\frac{p-1}{2}} q^{4\frac{p^2-1}{8}} f_{4p^2}^3\Bigg]\pmod{4},
\end{align*}
Extracting the terms involving $q^{pn+\frac{p^2-1}{2}}$, dividing both sides by $q^{\frac{p^2-1}{2}}$ and replacing $q^p$ by $q$, we arrive at
\begin{align}\label{inffammod4}
     \sum_{n=0}^{\infty} \opt(2\cdot3^{2\alpha}\cdot p^{2\delta+1}n+3^{2\alpha}\cdot p^{2(\delta+1)}) q^n\equiv 2\cdot p^{\delta+1} (-1)^{(\delta+1)\left(\frac{p-1}{2}\right)}f_{4p}^3\pmod{4},
\end{align}
Extracting the terms involving $q^{pn}$ and replacing $q^p$ by $q$, we obtain
\begin{align*}
     \sum_{n=0}^{\infty} \opt(2\cdot3^{2\alpha}\cdot p^{2(\delta+1)}n+3^{2\alpha}\cdot p^{2(\delta+1)}) q^n\equiv 2\cdot p^{\delta+1} (-1)^{(\delta+1)\left(\frac{p-1}{2}\right)}f_{4}^3\pmod{4},
\end{align*}
which is the $\delta+1$ case of \eqref{infmod4}. This completes the proof of \eqref{infmod4}.

Now, comparing the coefficients of $q^{pn+t}$ for $t\in\{1, 2, \cdots, p-1\}$ from \eqref{inffammod4}, we have
\begin{align*}
    \opt(2\cdot3^{2\alpha}\cdot p^{2\delta+1}(pn+t)+3^{2\alpha}\cdot p^{2(\delta+1)}) q^n\equiv 0\pmod{4},
\end{align*}
which completes the proof of \eqref{thminf4}.
\end{proof}

\begin{proof}[Proof of \eqref{thminf}]
From $\eqref{4n+18}$, we recall
\begin{align}
     \sum_{n\geq 0}\opt(4n+1)q^n &\equiv  2\frac{f_{4}^7}{f_{2}f_{8}^2}\Bigg(\frac{f_{4}^{14}}{f_{2}^{14} f_{8}^4} + 4 q \frac{f_{4}^2 f_{8}^4}{f_{2}^{10}}\Bigg) + 4 f_{2}f_{4} \pmod{8}.
\end{align}
Extracting the even powered terms of $q$, we obtain
\begin{align}
     \sum_{n\geq 0}\opt(8n+1)q^n &\equiv  2\frac{f_{2}^7}{f_{1}f_{4}^2}\cdot\frac{f_{2}^{14}}{f_{1}^{14} f_{4}^4}+ 4 f_{1}f_{2}\nonumber\\
     &\equiv 6f_{1}f_{2}\label{cong8n+18}\\ 
     &\equiv 6 \left(\frac{f_{6}f_{9}^4}{f_{3}f_{18}^2} - qf_{9}f_{18} - 2q^2 \frac{f_{3}f_{18}^4}{f_{6}f_{9}^2}\right)\pmod{8},\nonumber
\end{align}
where in the last step, we have employed \eqref{disf1f2}.\\
Extracting the terms involving $q^{3n+1}$, dividing both sides by $q$ and replacing $q^3$ by $q$, we obtain
\begin{align*}
     \sum_{n\geq 0}\opt(24n+9)q^n &\equiv -6f_{3}f_{6}\pmod{8}.
\end{align*}
Extracting the terms involving $q^{3n}$ and replacing $q^3$ by q, we obtain
\begin{align}
    \sum_{n\geq 0}\opt(72n+9)q^n &\equiv -6f_{1}f_{2}\pmod{8}.\label{1}
\end{align}
From \eqref{1} and \eqref{cong8n+18}, we arrive at
\begin{align*}
    \opt(8n+1) &\equiv -\opt(72n+9)\\
    &\equiv -\opt(8(3^2n+1)+1)\\
    &\equiv \opt(72(3^2n+1)+9)\\
    &\equiv \opt(8(3^4n+3^2\cdot 1+1)+1)\\
    &\qquad\vdots\notag\\
    &\equiv (-1)^{\alpha} \opt\left(8\left(3^{2\alpha}n+\frac{3^{2\alpha}-1}{8}\right)+1\right)\\
    &\equiv (-1)^{\alpha}\opt(8\cdot 3^{2\alpha}n+3^{2\alpha}) \pmod{8}
\end{align*}
for any $\alpha > 0$.
Hence, 
\begin{align}
     \sum_{n\geq 0}  (-1)^{\alpha}\opt(8\cdot 3^{2\alpha}n+3^{2\alpha}) q^{n} \equiv 6f_{1}f_{2}\pmod{8} \label{induction1mod8}
\end{align}
for any $\alpha > 0$.
Next, with the aid of mathematical induction, we prove that, if $p$ is a prime satisfying $\left(\frac{-2}{p}\right)=-1$, then for $\delta\geq0$,
\begin{align}\label{induction}
     \sum_{n\geq 0}  (-1)^{\alpha}\opt(8\cdot 3^{2\alpha}\cdot p^{2\delta}n+3^{2\alpha}\cdot p^{2\delta}) q^{n}\equiv (-1)^{\pm\delta\left(\frac{p-1}{3}\right)}6f_{1}f_{2}\pmod{8}.
\end{align}
Clearly, the case $\delta=0$ is true as \eqref{induction1mod8} holds.

Suppose that the result holds true for some $\delta>0$. Then, by using the $p$-dissection of $f_{1}$ stated in Lemma \ref{pdis}, we have 
\begin{align}\label{induction2mod8}
   & \sum_{n\geq 0}  (-1)^{\alpha}\opt(8\cdot 3^{2\alpha}\cdot p^{2\delta}n+3^{2\alpha}\cdot p^{2\delta})q^{n}\notag \\&\equiv 6\cdot (-1)^{\pm\delta\left(\frac{p-1}{3}\right)} \Bigg[\sum_{k \neq \frac{\pm p-1}{6}, k = - \frac{p-1}{2} }^{\frac{p-1}{2}} (-1)^k q^{\frac{3k^2 + k}{2}} f\Bigg(-q^{\frac{3p^2+(6k+1)p}{2}},-q^{\frac{3p^2-(6k+1)p}{2}}\Bigg)\\&\quad+(-1)^{\frac{\pm p-1}{6}} q^{\frac{p^2 - 1}{24}} f_{p^2}\Bigg] \cdot \Bigg[\sum_{k \neq \frac{\pm p-1}{6}, k = - \frac{p-1}{2} }^{\frac{p-1}{2}} (-1)^k q^{2\cdot\frac{3k^2 + k}{2}} f\Bigg(-q^{2\cdot\frac{3p^2+(6k+1)p}{2}},-q^{2\cdot\frac{3p^2-(6k+1)p}{2}}\Bigg)\\&\quad +(-1)^{\frac{\pm p-1}{6}} q^{2\cdot\frac{p^2 - 1}{24}} f_{2\cdot p^2} \Bigg]\pmod8.
\end{align}
Now we consider the congruence 
\begin{align}\label{cong-eqn1}
	\frac{3k^2+k}{2}+2\cdot\frac{3m^2+m}{2} \equiv \frac{p^2-1}{8} \pmod{p},
\end{align}
where $-\frac{p-1}{2} \leq k , m \leq \frac{p-1}{2}$. Since the above congruence is equivalent to solving the congruence 
\begin{align*}
	(6k+1)^2+2(6m+1)^2 \equiv 0 \pmod{p},
\end{align*}
and $\left(\frac{-2}{p}\right) = -1$, it follows that \eqref{cong-eqn1} has the unique solution $k = m = \frac{\pm p-1}{6}$. Therefore, extracting the terms involving $q^{pn+\frac{p^2-1}{8}}$ from both sides of \eqref{induction2mod8}, dividing by $q^{\frac{p^2-1}{8}}$ and replacing $q^p$ by $q$ we find that
\begin{align}\label{cong-infmod8}
     \sum_{n\geq 0}  (-1)^{\alpha}\opt(8\cdot 3^{2\alpha}\cdot p^{2\delta+1}n+3^{2\alpha}\cdot p^{2(\delta+1)}) q^{n}\equiv (-1)^{\pm(\delta+1)\left(\frac{p-1}{3}\right)}6f_{p}f_{2p}\pmod{8}.
\end{align}
Extracting the terms involving $q^{pn}$ and replacing $q^p$ by $q$, we arrive at
\begin{align*}
     \sum_{n\geq 0}  (-1)^{\alpha}\opt(8\cdot 3^{2\alpha}\cdot p^{2(\delta+1)}n+3^{2\alpha}\cdot p^{2(\delta+1)}) q^{n}\equiv (-1)^{\pm(\delta+1)\left(\frac{p-1}{3}\right)}6f_{1}f_{2}\pmod{8},
\end{align*}
which is the $\delta+1$ case of \eqref{induction}. Hence \eqref{induction} holds.

Now, equating the coefficients of $q^{pn+r}$ for $r\in\{1, 2, \cdots, p-1\}$ from both sides of \eqref{cong-infmod8}, we have \eqref{thminf}.
\end{proof}

\section{Proof of Theorem \ref{thm:10-n}}\label{sec:thm-10-n}

We first state and prove a result that will help us prove Theorem \ref{thm:10-n}.
\begin{lemma}\label{lem:7}
    For all odd $t\geq 1$, we have
\begin{align*}
    (\phi(q)\phi(q^2)\phi(q^4)^{2})^t = \sum_{j=0}^{7}a_{t,j}q^{j}F_{t,j}(q^8),
\end{align*}
where $F_{t,j}(q^8)$ is a function of $q^8$ whose power series representation has integer coefficients, and the following divisibilities hold:
\begin{align*}
    a_{t,1} &\equiv 0 \pmod{2},\\
    a_{t,2} &\equiv 0 \pmod{2},\\
    a_{t,3} &\equiv 0 \pmod{4},\\
    a_{t,4} &\equiv 0 \pmod{2},\\
    a_{t,5} &\equiv 0 \pmod{8},\\
    a_{t,6} &\equiv 0 \pmod{4},\\
    a_{t,7} &\equiv 0 \pmod{16}.
\end{align*}
\end{lemma}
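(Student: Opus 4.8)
The plan is to compute the full dissection of $\bigl(\phi(q)\phi(q^2)\phi(q^4)^2\bigr)^t$ according to the residue of the exponent of $q$ modulo $8$, directly from a multinomial expansion, and then reduce the divisibility claims to a single elementary estimate on $2$-adic valuations. First I would push Lemma \ref{lem3} down to the level of $q^8$: taking \eqref{phi-2-dissect} together with its images under $q\mapsto q^2$ and $q\mapsto q^4$ and substituting, one obtains
\[
\phi(q)=\phi(q^{16})+2q\,\psi(q^8)+2q^4\psi(q^{32}),\qquad \phi(q^2)=\phi(q^8)+2q^2\psi(q^{16}),\qquad \phi(q^4)=\phi(q^{16})+2q^4\psi(q^{32}).
\]
The point is that each of $\phi(q^{16}),\psi(q^8),\psi(q^{32}),\phi(q^8),\psi(q^{16})$ is a power series in $q^8$ with integer coefficients; abbreviating these by $\alpha,\beta,\gamma,\epsilon,\zeta$, we have $\phi(q)=\alpha+2\beta q+2\gamma q^4$, $\phi(q^2)=\epsilon+2\zeta q^2$ and $\phi(q^4)=\alpha+2\gamma q^4$.

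Next I would raise these to the relevant powers by the (multi)nomial theorem and collect terms. Expanding $\phi(q)^t$, $\phi(q^2)^t$ and $\phi(q^4)^{2t}$ gives a quadruple sum indexed by $b,c,e,f$, where $b$ counts the chosen $2\beta q$ factors, $e$ the $2\zeta q^2$ factors, and $c,f$ the $2\gamma q^4$ factors arising from $\phi(q)^t$ and $\phi(q^4)^{2t}$; a summand then carries $q^{\,b+2e+4(c+f)}$ and the power of two $2^{\,b+e+c+f}$. The decisive observation is that both the accompanying monomial in $\alpha,\beta,\gamma,\epsilon,\zeta$ and the power of two depend on $c,f$ only through $s:=c+f$, so the inner sum over the splittings $c+f=s$ collapses by Vandermonde's identity $\sum_{c+f=s}\binom{t-b}{c}\binom{2t}{f}=\binom{3t-b}{s}$. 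This produces the closed form
\[
\bigl(\phi(q)\phi(q^2)\phi(q^4)^2\bigr)^t=\sum_{b,e,s\ge 0}2^{\,b+e+s}\binom{t}{b}\binom{t}{e}\binom{3t-b}{s}\,\alpha^{3t-b-s}\beta^{b}\gamma^{s}\epsilon^{t-e}\zeta^{e}\,q^{\,b+2e+4s}.
\]
Because every theta factor is a series in $q^8$, the exponent of a summand is $\equiv b+2e+4s\pmod 8$, so grouping summands by this residue yields exactly the decomposition $\sum_{j=0}^7 a_{t,j}q^j F_{t,j}(q^8)$ of the lemma, with each $F_{t,j}$ automatically integral.

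It then remains to bound the $2$-adic valuation $v_2(a_{t,j})$; since a common factor of a sum divides it as soon as it divides each term, it suffices to prove that for every triple $(b,e,s)$ with $b+2e+4s\equiv j\pmod 8$,
\[
(b+e+s)+v_2\!\binom{t}{b}+v_2\!\binom{t}{e}+v_2\!\binom{3t-b}{s}\;\ge\;c_j,
\]
where $(c_0,\dots,c_7)=(0,1,1,2,1,3,2,4)$ are the exponents demanded. The congruence $b+2e+4s\equiv j$ forces the parities and residues modulo $4$ of $b,e,s$, which pins down the least possible value of $b+e+s$ for each $j$; a short finite check shows that for $j\in\{1,2,3,4,6\}$ this minimum already reaches $c_j$ with no help needed from the binomial valuations.

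I expect the main obstacle to be the two tight residues $j=5$ and $j=7$. For these the extremal triples $(b,e,s)=(1,0,1)$ and $(1,1,1)$ give $b+e+s=c_j-1$, so the missing factor of $2$ cannot come from $b+e+s$ and must be squeezed out of the binomial coefficients. This is precisely where the hypothesis that $t$ is odd enters: for odd $t$ we have $v_2(t)=0$ (so $\binom{t}{b}$ offers nothing), while $3t-1$ is even, whence $v_2\binom{3t-1}{1}=v_2(3t-1)\ge 1$ supplies exactly the needed factor, lifting the valuations to $c_5=3$ and $c_7=4$. Checking that $(1,0,1)$ and $(1,1,1)$ are the only deficient triples for $j=5,7$ (every other admissible triple having $b+e+s\ge c_j$ outright) then closes the argument; it is this finite-but-careful bookkeeping, rather than any single deep step, that I anticipate will be the most delicate part of the write-up.
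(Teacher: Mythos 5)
Your proof is correct, but it takes a genuinely different route from the one in the paper. The paper proceeds by induction on odd $t$: the base case $t=1$ and the step $t\mapsto t+2$ (multiplying by $(\phi(q)\phi(q^2)\phi(q^4)^2)^2$) are both handled by explicit machine expansion modulo $16$ of products of the dissected theta functions, after which one reads off that every term lands in the required residue class with the required power of $2$. You instead iterate \eqref{phi-2-dissect} to write $\phi(q)=\alpha+2\beta q+2\gamma q^4$, $\phi(q^2)=\epsilon+2\zeta q^2$, $\phi(q^4)=\alpha+2\gamma q^4$ with $\alpha,\beta,\gamma,\epsilon,\zeta$ series in $q^8$, collapse the multinomial expansion via Vandermonde to the closed form $\sum 2^{b+e+s}\binom{t}{b}\binom{t}{e}\binom{3t-b}{s}\alpha^{3t-b-s}\beta^b\gamma^s\epsilon^{t-e}\zeta^e q^{b+2e+4s}$, and reduce the lemma to the $2$-adic inequality $b+e+s+v_2\binom{3t-b}{s}\geq c_j$ on each residue class $b+2e+4s\equiv j\pmod 8$. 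I checked the finite case analysis: the only deficient triples are $(b,e,s)=(1,0,1)$ for $j=5$ and $(1,1,1)$ for $j=7$, and in both cases $v_2\binom{3t-1}{1}=v_2(3t-1)\geq1$ for odd $t$ supplies the missing factor, so your bound does close. What your approach buys is a computer-free, uniform proof that makes visible exactly where the hypothesis that $t$ is odd enters (only in those two tight cases); what the paper's induction buys is brevity of exposition at the cost of two opaque symbolic computations. One cosmetic remark: the lemma phrases the conclusion as a divisibility of a single constant $a_{t,j}$, whereas what is really meant (and what both proofs deliver) is that the entire $j$-th component of the $8$-dissection is divisible by $2^{c_j}$; your term-by-term valuation bound gives precisely that.
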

\begin{proof}
We prove the above lemma with the help of induction on $t$. First, we prove the result for $t=1$.

 By multiple applications of Lemma \ref{lem3}, we have
    \begin{align*}
        \phi(q)\phi(q^2)\phi(q^4)^2 &= \left(2 q \psi(q^{8})+\phi (q^{16})+2 q^4 \psi(q^{32})\right)\\
        &\quad\times\left(2 q^2 \psi(q^{16})+\phi (q^{32})+2 q^{8} \psi(q^{64})\right)\\&\quad \times\left(2 q^{4} \psi(q^{32})+\phi (q^{64})+2 q^{16} \psi(q^{128})\right)^2.
    \end{align*}

   To expand the product, we set $p[n$\_$]:=2 q^{4 n} \psi _{q^{32 n}}+\phi _{q^{16 n}}+2 q^n \psi _{q^{8 n}}$ in Mathematica, where $\phi_{q^{n}}=\phi(q^n)$ and $\psi_{q^{n}}=\psi(q^n)$.
    Now, we expand this expression modulo 16 in Mathematica using the command PolynomialMod[p[1] p[2] $p[4]^2$,16] to arrive at
    \begin{align*}
        \phi(q)\phi(q^2)\phi(q^4)^2 &\equiv \phi(q^{16}) \phi(q^{32}) \phi(q^{64})^2 + 2 q \psi(q^8) \phi(q^{32}) \phi(q^{64})^2+2 q^2 \psi(q^{16}) \phi(q^{16}) \phi(q^{64})^2\\
        &\quad +4 q^3 \psi(q^8) \psi(q^{16}) \phi(q^{64})^2+4 q^4 \psi(q^{32}) \phi(q^{16}) \phi(q^{32}) \phi(q^{64})\\
        &\quad +2 q^4 \psi(q^{32}) \phi(q^{32}) \phi(q^{64})^2+8 q^5 \psi(q^8) \psi(q^{32}) \phi(q^{32}) \phi(q^{64})\\
        &\quad +8 q^6 \psi(q^{16}) \psi(q^{32}) \phi(q^{16}) \phi(q^{64})+4 q^6 \psi(q^{16}) \psi(q^{32}) \phi(q^{64})^2\\
        &\quad +2 q^8 \psi(q^{64}) \phi(q^{16}) \phi(q^{64})^2+4 q^8 \psi(q^{32})^2 \phi(q^{16}) \phi(q^{32})\\
        &\quad +8 q^8 \psi(q^{32})^2 \phi(q^{32}) \phi(q^{64}) +8 q^9 \psi(q^8) \psi(q^{32})^2 \phi(q^{32})\\
        &\quad +4 q^9 \psi(q^8) \psi(q^{64}) \phi(q^{64})^2 +8 q^{10} \psi(q^{16}) \psi(q^{32})^2 \phi(q^{16})\\
        &\quad+8 q^{12} \psi(q^{32})^3 \phi(q^{32}) +8 q^{12} \psi(q^{32}) \psi(q^{64}) \phi(q^{16}) \phi(q^{64})\\
        &\quad+4 q^{12} \psi(q^{32}) \psi(q^{64}) \phi(q^{64})^2 +8 q^{16} \psi(q^{32})^2 \psi(q^{64}) \phi(q^{16})\\
        &\quad+4 q^{16} \psi(q^{128}) \phi(q^{16}) \phi(q^{32}) \phi(q^{64}) +8 q^{17} \psi(q^8) \psi(q^{128}) \phi(q^{32}) \phi(q^{64})\\
        &\quad+8 q^{18} \psi(q^{16}) \psi(q^{128}) \phi(q^{16}) \phi(q^{64}) +8 q^{20} \psi(q^{32}) \psi(q^{128}) \phi(q^{16}) \phi(q^{32})\\
        &\quad+8 q^{20} \psi(q^{32}) \psi(q^{128}) \phi(q^{32}) \phi(q^{64}) +8 q^{24} \psi(q^{64}) \psi(q^{128}) \phi(q^{16}) \phi(q^{64})\\
        &\quad+4 q^{32} \psi(q^{128})^2 \phi(q^{16}) \phi(q^{32}) +8 q^{33} \psi(q^8) \psi(q^{128})^2 \phi(q^{32})\\
        &\quad+8 q^{34} \psi(q^{16}) \psi(q^{128})^2 \phi(q^{16}) +8 q^{36} \psi(q^{32}) \psi(q^{128})^2 \phi(q^{32})\\
        &\quad+8 q^{40} \psi(q^{64}) \psi(q^{128})^2 \phi(q^{16})\pmod{16},
    \end{align*}
    \ 

    \ 
    
    from which it is clear that the coefficients in the 8-dissection satisfy the criteria in the statement of the lemma.

    Next, we move to the induction step of the proof. Let the result be true for some $t\geq1$. Now,
    \begin{align*}
         (\phi(q)\phi(q^2)\phi(q^4)^{2})^{t+2} &=  (\phi(q)\phi(q^2)\phi(q^4)^{2})^t \times (\phi(q)\phi(q^2)\phi(q^4)^{2})^2\\
         & = \left(\sum_{j=0}^{7}a_{t,j}q^{j}F_{t,j}(q^8)\right)(\phi(q)\phi(q^2)\phi(q^4)^{2})^2.
    \end{align*}
Let us write
    \begin{align*}
        \sum_{j=0}^{7}a_{t,j}q^{j}F_{t,j}(q^8) &= b_{t,0}F_{t,0}(q^8)+2b_{t,1}qF_{t,1}(q^8)+2b_{t,2}q^2F_{t,2}(q^8)+4b_{t,3}q^3F_{t,3}(q^8)\\&\quad+2b_{t,4}q^4F_{t,4}(q^8)+8b_{t,5}q^5F_{t,5}(q^8)+4b_{t,6}q^6F_{t,6}(q^8)+16b_{t,7}q^7F_{t,7}(q^8),
    \end{align*}
    for some integers $b_{t,i}$, $i=1, 2, 3, 4, 5, 6, 7$. 

      Expanding modulo 16 similarly as in the last page using the Mathematica command\\ PolynomialMod$[(b_{t,0}F_{t,0}+2b_{t,1}qF_{t,1}+2b_{t,2}q^2F_{t,2}+4b_{t,3}q^3F_{t,3}+2b_{t,4}q^4F_{t,4}+8b_{t,5}q^5F_{t,5}+4b_{t,6}q^6F_{t,6}) \times p[1]^2 p[2]^2 p[4]^4, 16]$, where $F_{t,j}=F_{t,j}(q^8) (0\leq j\leq7)$, we have
\begin{align*}
    &\left(\sum_{j=0}^{7}a_{t,j}q^{j}F_{t,j}(q^8)\right)(\phi(q)\phi(q^2)\phi(q^4)^{2})^2\\ 
    &\equiv  8 q^{32}b_{t,0} F_{t,0}(q^8) \psi(q^{128})^2 \phi(q^{16})^2 \phi(q^{32})^2 \phi (q^{64})^2  + 8 q^{20 }b_{t,4} F_{t,4}(q^8) \psi(q^{64})^2 \phi(q^{16})^2 \phi(q^{64})^4 \\ &\quad+ 8 q^{18}b_{t,2} F_{t,2}(q^8) \psi(q^{64})^2 \phi(q^{16})^2 \phi(q^{64})^4  + 8 q^{17}b_{t,1} F_{t,1}(q^8) \psi(q^{64})^2 \phi(q^{16})^2 \phi(q^{64})^4 \\&\quad+ 4 q^{16}b_{t,0} F_{t,0}(q^8) \psi(q^{64})^2 \phi(q^{16})^2 \phi(q^{64})^4+8 q^{16} b_{t,0} F_{t,0}(q^8) \psi(q^{128}) \phi(q^{16})^2 \phi(q^{32})^2 \phi(q^{64})^3 \\&\quad+ 8 q^{12}b_{t,4} F_{t,4}(q^8) \psi(q^{32})^2 \phi(q^{32})^2 \phi(q^{64})^4+8 q^{12}b_{t,4} F_{t,4}(q^8) \psi(q^{64}) \phi(q^{16})^2 \phi(q^{32}) \phi(q^{64})^4\\ &\quad+ 8 q^{10}b_{t,2} F_{t,2}(q^8) \psi(q^{32})^2 \phi(q^{32})^2 \phi(q^{64})^4+8 q^{10}b_{t,0} F_{t,0}(q^8) \psi(q^{16}) \psi(q^{64}) \phi(q^{16})^2 \phi(q^{64})^4\\&\quad+8 q^{10}b_{t,2} F_{t,2}(q^8) \psi(q^{64}) \phi(q^{16})^2 \phi(q^{32}) \phi(q^{64})^4+8 q^9 b_{t,1} F_{t,1}(q^8) \psi(q^{32})^2 \phi(q^{32})^2 \phi(q^{64})^4 \\&\quad+8 q^9 b_{t,1} F_{t,1}(q^8) \psi(q^{64}) \phi(q^{16})^2 \phi(q^{32}) \phi(q^{64})^4 + 4 q^8 b_{t,0} F_{t,0}(q^8) \psi(q^{32})^2 \phi(q^{32})^2 \phi(q^{64})^4\\ &\quad+ 8 q^8 b_{t,4} F_{t,4}(q^8) \psi(q^{16})^2 \phi(q^{16})^2 \phi(q^{64})^4 +4 q^8 b_{t,0} F_{t,0}(q^8) \psi(q^{64}) \phi(q^{16})^2 \phi(q^{32}) \phi(q^{64})^4\\&\quad+8 q^8 b_{t,0} F_{t,0}(q^8) \psi(q^{32})^2 \phi(q^{16})^2 \phi(q^{32})^2 \phi(q^{64})^2 + 8 q^8 b_{t,4} F_{t,4}(q^8) \psi(q^{32}) \phi(q^{16}) \phi(q^{32})^2 \phi(q^{64})^4\\&\quad +  8 q^6 b_{t,2} F_{t,2}(q^8) \psi(q^{16})^2 \phi(q^{16})^2 \phi(q^{64})^4+8 q^6 b_{t,2} F_{t,2}(q^8) \psi(q^{32}) \phi(q^{16}) \phi(q^{32})^2 \phi(q^{64})^4\\&\quad+8 q^6 b_{t,4} F_{t,4}(q^8) \psi(q^{16}) \phi(q^{16})^2 \phi(q^{32}) \phi(q^{64})^4+4 q^6 b_{t,6} F_{t,6}(q^8) \phi(q^{16})^2 \phi(q^{32})^2 \phi(q^{64})^4\\&\quad+8 q^6 b_{t,4} F_{t,4}(q^8) \psi(q^8)^2 \phi(q^{32})^2 \phi(q^{64})^4+8 q^5 b_{t,1} F_{t,1}(q^8) \psi(q^{16})^2 \phi(q^{16})^2 \phi(q^{64})^4\\&\quad+8 q^5 b_{t,1} F_{t,1}(q^8) \psi(q^{32}) \phi(q^{16}) \phi(q^{32})^2 \phi(q^{64})^4+8 q^5b_{,5} F_{t,5}(q^8) \phi(q^{16})^2 \phi(q^{32})^2 \phi(q^{64})^4\\&\quad+8 q^5 b_{t,0} F_{t,0}(q^8) \psi(q^8) \psi(q^{32}) \phi(q^{32})^2 \phi(q^{64})^4+8 q^5 b_{t,4} F_{t,4}(q^8) \psi(q^8) \phi(q^{16}) \phi(q^{32})^2 \phi(q^{64})^4\\&\quad+4 q^4 b_{t,0} F_{t,0}(q^8) \psi(q^{16})^2 \phi(q^{16})^2 \phi(q^{64})^4+4q^4 b_{t,0} F_{t,0}(q^8) \phi(q^{16}) \phi(q^{32})^2\psi(q^{32}) \phi(q^{64})^4\\&\quad+8 q^4 b_{t,0} F_{t,0}(q^8) \psi(q^{32}) \phi(q^{16})^2 \phi(q^{32})^2 \phi(q^{64})^3+8 q^4b_{t,2} F_{t,2}(q^8) \psi(q^{16}) \phi(q^{16})^2 \phi(q^{32}) \phi(q^{64})^4\\&\quad+2q^4 b_{t,4} F_{t,4}(q^8) \phi(q^{16})^2 \phi(q^{32})^2 \phi(q^{64})^4+ 8 q^4b_{t,2} F_{t,2}(q^8) \psi(q^8)^2 \phi(q^{32})^2 \phi(q^{64})^4 \\&\quad+8q^3 b_{t,1} F_{t,1}(q^8) \psi(q^{16}) \phi(q^{16})^2 \phi(q^{32}) \phi(q^{64})^4+4 q^3 b_{t,3} F_{t,3}(q^8) \phi(q^{16})^2 \phi(q^{32})^2 \phi(q^{64})^4\\&\quad+8 q^3 b_{t,1} F_{t,1}(q^8) \psi(q^8)^2 \phi(q^{32})^2 \phi(q^{64})^4+8 q^3 b_{t,2} F_{t,2}(q^8) \psi(q^8) \phi(q^{16}) \phi(q^{32})^2 \phi(q^{64})^4\\&\quad+4 q^2 b_{t,0} F_{t,0}(q^8) \psi(q^{16}) \phi(q^{16})^2 \phi(q^{32}) \phi(q^{64})^4+2q^2 b_{t,2} F_{t,2}(q^8) \phi(q^{16})^2 \phi(q^{32})^2 \phi(q^{64})^4 \\&\quad+4q^2 b_{t.0} F_{t,0} (q^8)\psi(q^8)^2 \phi(q^{32})^2 \phi(q^{64})^4+8 q^2 b_{t,1} F_{t,1}(q^8) \psi(q^8) \phi(q^{16}) \phi(q^{32})^2 \phi(q^{64})^4\\&\quad+2 qb_{t,1} F_{t,1}(q^8) \phi(q^{16})^2 \phi(q^{32})^2 \phi(q^{64})^4+4 qb_{t,0} F_{t,0}(q^8) \psi(q^8) \phi(q^{16}) \phi(q^{32})^2 \phi(q^{64})^4\\&\quad+b_{t,0} F_{t,0}(q^8) \phi(q^{16})^2 \phi(q^{32})^2 \phi(q^{64})^4 \pmod{16},
    \end{align*}
which shows that the coefficients of $\left(\sum_{j=0}^{7}a_{t+2,j}q^{j}F_{t+2,j}(q^8)\right)$ satisfy the appropriate divisibilities.
\end{proof}
\begin{proof}[Proof of Theorem \ref{thm:10-n}]
    From Theorem \ref{hs-1} and the generating function for $\opt_k(n)$ we have, for any odd $t$,
    \begin{align*}
        \sum_{n\geq0} \opt_{t}(n)q^n = \phi(q)^t\phi(q^2)^t\phi(q^4)^{2t}\phi(q^8)^{4t}\cdots
    \end{align*}
    Since $\left(\prod_{i\geq3}\phi(q^{2^i})\right)^{2^{i-1}\cdot t}$ is a function of $q^8$, it is enough to do the 8-dissection of the first three terms. Also, as the highest modulus involved in the theorem is 16 and the other moduli are divisors of 16, we will prove our result modulo 16.

Hence,
 \begin{align*}
        \sum_{n\geq0} \opt_{t}(n)q^n &= \phi(q)^t\phi(q^2)^t\phi(q^4)^{2t}\phi(q^8)^{4t}\cdots\\
        &= \left(\sum_{j=0}^{7}a_{t,j}q^{j}F_{t,j}(q^8)\right) \left(\prod_{i\geq3}\phi(q^{2^i})\right)^{2^{i-1}\cdot t}.
    \end{align*}
The result now follows easily from Lemma \ref{lem:7}.
\end{proof}

\section{Proofs of Theorems \ref{inffammod16} and \ref{thm:inf2k+1}}\label{sec:inf2}

\begin{proof}[Proof of Theorem \ref{inffammod16}]
We have
\begin{align*}
    \sum_{n\geq0} \opt_{4}(n)q^n &= \frac{f_{2}^{12}}{f_{1}^8f_{4}^4}\equiv \frac{f_{2}^4f_{1}^8}{f_{4}^4}\equiv \frac{f_{2}^4}{f_{4}^4} \cdot \left(f_{1}^4\right)^2 \pmod{16}.
\end{align*}
Employing \eqref{dis1byf1^4} in the last step and extracting the odd powered terms of $q$, we have
\begin{align*}
    \sum_{n\geq0} \opt_{4}(2n+1)q^n &\equiv 8f_{32} \pmod{16}. 
\end{align*}
Further, extracting the odd powered terms of $q$, we obtain
\begin{align*}
      \sum_{n\geq0} \opt_{4}(4n+1)q^n &\equiv 8f_{16} \pmod{16}.
\end{align*}
Again, extracting the terms involving $q^{2n+1}$, dividing both sides by $q$ and replacing $q^2$ by $q$, we have
\begin{align*}
     \sum_{n\geq0} \opt_{4}(8n+1)q^n &\equiv 8f_{2}^3 \pmod{16}.
\end{align*}
Employing \eqref{disf1^3} in the last step, extracting the terms involving $q^{3n+2}$, dividing both sides by $q^2$ and replacing $q^3$ by $q$, we have
\begin{align*}
     \sum_{n\geq0} \opt_{4}(24n+17)q^n &\equiv 8f_{6}^3 \pmod{16}.
\end{align*}
Extracting the terms involving $q^{3n}$ and replacing $q^3$ by $q$, we have
\begin{align*}
     \sum_{n\geq0} \opt_{4}(72n+17)q^n &\equiv 8f_{2}^3 \pmod{16}.
\end{align*}
Hence,
\begin{align*}
    \opt_{4}(8n+1) \equiv \opt_{4}(72n+17)  \pmod{16}.
\end{align*}
The remainder of the proof is similar to the proof of \eqref{thminf4} in Section \ref{sec:inf}.
\end{proof}

\begin{proof}[Proof of Theorem \ref{thm:inf2k+1}]
 From 
 \eqref{cong2k+1}, recall 
    \begin{align*}
        \sum_{n\geq 0}\opt_{2k+1}(n)q^n& \equiv \frac{f_{2}^{3}}{f_{1}^{2}f_{4}}\pmod{4}. 
    \end{align*}
    Employing \eqref{dis1byf1^2} in the last step, extracting the odd  powered terms of $q$, we have
\begin{align*}
     \sum_{n\geq 0}\opt_{2k+1}(2n+1)q^n& \equiv 2\frac{f_2 f_{8}^2}{f_1^2 f_4}\nonumber\equiv 2 f_{4}^3 \pmod{4}. 
\end{align*}
The rest of the proof is exactly  similar to the  proof of \eqref{thminf4} in Section \ref{sec:inf}.
\end{proof}

\section{Concluding Remarks}\label{sec:conc}

\begin{enumerate}
    \item Keister, Sellers and Vary \cite[Theorems 6 and 9]{KeisterSellersVary}, and Chen \cite{Chen} proved some other results about overpartition $k$-tuples. It would be interesting to see if we can find any analogous results for overpartition $k$-tuples with odd parts.
    \item Chen \cite{zChen} and Kim \cite{Kim} have proved some congruences for overpartitions with odd parts and overpartition pairs respectively, using the theory of modular forms. It would be interesting to explore similar techniques for overpartition $k$-tuples with odd parts.
    \item Based on numerical evidence, we conjecture the following counterpart of Theorem \ref{thm:10-n}.
\begin{conjecture}\label{conj2i}
    For all $i\geq1$, $n\geq 0$ with $n, i \in \mathbb{Z}$ and odd $r$, we have
    \begin{align*}
        \opt_{2^ir}(8n+1)&\equiv0\pmod{2^{i+1}},\\
        \opt_{2^ir}(8n+2)&\equiv0\pmod{2^{2i+1}},\\
        \opt_{2^ir}(8n+3)&\equiv0\pmod{2^{i+3}},\\
        \opt_{2^ir}(8n+4)&\equiv0\pmod{2^{2i+4}},\\
        \opt_{2^ir}(8n+5)&\equiv0\pmod{2^{i+2}},\\
        \opt_{2^ir}(8n+6)&\equiv0\pmod{2^{2i+3}},\\
        \opt_{2^ir}(8n+7)&\equiv0\pmod{2^{i+4}}.
    \end{align*}
\end{conjecture}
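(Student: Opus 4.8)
The plan is to mimic the dissection strategy behind Theorem \ref{thm:10-n}, but now carrying $2$-adic valuations to higher precision. Writing $G(q)=\phi(q)\phi(q^2)\phi(q^4)^2\phi(q^8)^4\cdots$ for the series of Theorem \ref{hs-1}, the generating function factors as $\sum_n \opt_k(n)q^n=\bigl(f_2^3/(f_1^2f_4)\bigr)^k=G(q)^k$, so for $k=2^i r$ we must analyze $G(q)^{2^i r}$. Exactly as in the proof of Theorem \ref{thm:10-n}, the tail $\prod_{m\ge 3}\phi(q^{2^m})^{2^{m-1}\cdot 2^i r}$ is a function of $q^8$, so the seven congruences are equivalent to statements about the $8$-dissection of $\Phi(q)^{2^i r}$, where $\Phi(q)=\phi(q)\phi(q^2)\phi(q^4)^2$. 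Writing this dissection as $\Phi(q)^{2^i r}=\sum_{j=0}^7 q^j P_{i,j}(q^8)$, the conjecture is precisely the assertion that every coefficient of $P_{i,j}$ is divisible by $2^{w_i(j)}$, where
\[
(w_i(0),\dots,w_i(7))=(0,\, i{+}1,\, 2i{+}1,\, i{+}3,\, 2i{+}4,\, i{+}2,\, 2i{+}3,\, i{+}4).
\]

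I would prove this by induction on $i$. The base case $i=1$ (and, if needed, $i=2$) can be settled by the same explicit, Mathematica-assisted expansion used for Lemma \ref{lem:7}, starting from $\Phi(q)^{2r}=(\Phi(q)^r)^2$ together with the odd-exponent dissection of $\Phi(q)^r$ whose valuation vector is $(0,1,1,2,1,3,2,4)$. For the inductive step I use $\Phi(q)^{2^{i+1}r}=(\Phi(q)^{2^i r})^2$: squaring $\sum_j q^j P_{i,j}(q^8)$ yields
\[
P_{i+1,\ell}(q^8)=\sum_{\substack{j+k\equiv \ell\,(8)\\ j\le k}} 2^{[j\neq k]}\, q^{8\lfloor (j+k)/8\rfloor}\, P_{i,j}(q^8)P_{i,k}(q^8),
\]
so each new component is a $2$-weighted sum of products of old ones, and the target valuations satisfy the clean recursion $w_{i+1}(\ell)=w_i(\ell)+1$ for odd $\ell$ and $w_{i+1}(\ell)=w_i(\ell)+2$ for even $\ell$.

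For the odd residues $\ell\in\{1,3,5,7\}$ the induction closes immediately: among all pairs with $j+k\equiv \ell$, the single cross term $2P_{i,0}P_{i,\ell}$ has the strictly smallest valuation $1+w_i(\ell)=w_{i+1}(\ell)$ (here $P_{i,0}$ is a unit, since $\Phi(q)^{2^i r}$ has constant term $1$), while every other pair exceeds this bound. Thus no cancellation is needed, and the four odd-residue congruences follow from valuation bookkeeping alone.

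The even residues $\ell\in\{2,4,6\}$ are the heart of the matter and, I expect, the main obstacle. Here the naive valuation count falls short of the target by one: at $\ell=2$, for instance, both $P_{i,1}^2$ and $2P_{i,0}P_{i,2}$ attain the minimal valuation $2i+2$, whereas the target is $2i+3$. Extracting the extra factor of $2$ requires $P_{i,1}^2+2P_{i,0}P_{i,2}\equiv 0\pmod{2^{2i+3}}$, which, after normalizing $P_{i,1}=2^{i+1}\tilde P_{i,1}$ and $P_{i,2}=2^{2i+1}\tilde P_{i,2}$, reduces to the mod-$2$ identity $\tilde P_{i,2}\equiv \tilde P_{i,1}^{\,2}\pmod 2$ between the normalized dissection pieces; analogous relations (such as $\tilde P_{i,6}\equiv \tilde P_{i,1}\tilde P_{i,5}\pmod 2$) are needed at $\ell=4,6$, and the situation is even more delicate at small $i$, where a term such as $P_{1,2}^2$ sits strictly below the target valuation and can be rescued only by cancellation against the remaining residue-$4$ contributions. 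The valuation data alone cannot furnish these relations, so they must be promoted to part of the induction hypothesis and shown to propagate through squaring. Establishing and carrying this finer arithmetic structure of the $P_{i,j}$ — most plausibly via explicit closed forms obtained by iterating Lemma \ref{lem3}, or via a modular-forms argument as hinted at in the concluding remarks — is exactly the step I expect to be hard, and is presumably why the statement remains a conjecture.
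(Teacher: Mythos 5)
There is no proof in the paper to compare your attempt against: the statement is posed as Conjecture \ref{conj2i}, supported only by numerical evidence. The paper's closing remark observes merely that the first congruence follows from Theorem \ref{thm:k2} (which gives $\opt_{2^ir}(n)\equiv 0\pmod{2^{i+1}}$ for \emph{all} $n\geq 1$, no dissection needed --- a fact your framework recovers at residue $1$ but which you could simply have cited) and that the fifth follows from an analogue of \cite[Theorem 9]{KeisterSellersVary}; the note added in proof records that further cases were settled only later, in \cite{DSS}. So your proposal should be judged as a research plan, and as such its correct parts are genuinely correct: the reduction via Theorem \ref{hs-1} to the $8$-dissection of $\bigl(\phi(q)\phi(q^2)\phi(q^4)^2\bigr)^{2^ir}$ with the tail a series in $q^8$ is exactly the mechanism of Lemma \ref{lem:7} and Theorem \ref{thm:10-n}; your valuation vector matches the conjectured moduli (though componentwise divisibility of the $P_{i,j}$ is \emph{sufficient} for the conjecture, not equivalent to it); the squaring recursion $w_{i+1}(\ell)=w_i(\ell)+1$ for odd $\ell$ and $+2$ for even $\ell$ is right; and I have checked your claim that for odd $\ell$ the unique minimal pair in the inductive step is $2P_{i,0}P_{i,\ell}$, all other pairs exceeding $w_{i+1}(\ell)$ with room to spare.

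The gap you flag at $\ell\in\{2,4,6\}$ is real, but it is not the only one: your base case is also not a matter of bookkeeping. Squaring the odd-power dissection of Lemma \ref{lem:7}, whose valuation vector is $(0,1,1,2,1,3,2,4)$, falls short of the $i=1$ targets even at \emph{odd} residues: at residue $3$ both cross terms $2P_0P_3$ and $2P_1P_2$ have naive valuation $3$ against the target $i+3=4$, and at residue $7$ both $2P_1P_6$ and $2P_3P_4$ have valuation $4$ against the target $i+4=5$. So the ``Mathematica-assisted'' base computation must already verify cancellations --- mod-$2$ identities among the explicit theta-products in the proof of Lemma \ref{lem:7} --- which is precisely the structure you defer at the even residues; nothing in the sketch produces these identities or shows they propagate through squaring. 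In short, as written the proposal proves none of the seven congruences beyond what Theorem \ref{thm:k2} already gives. On the positive side, your observation that the odd-residue inductive step has ample slack (it closes even if the even-indexed hypotheses are weakened) suggests a two-track induction could plausibly salvage the odd-residue cases once the base cancellations are established, which is consistent with the fact that three cases of the conjecture were subsequently proved in \cite{DSS}; but that finer mod-$2$ arithmetic is exactly the missing idea, and your own diagnosis that this is why the statement remained a conjecture is accurate.
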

\end{enumerate}
\begin{remark}
    We note that the first congruence of Conjecture \ref{conj2i} follows from Theorem \ref{thm:k2}. The fifth congruence of Conjecture \ref{conj2i} follows from a result analogous to a result of Keister, Vary and the third author \cite[Theorem 9]{KeisterSellersVary}. We do not prove the result here, the interested reader can verify the details. This means, only five congruences of Conjecture \ref{conj2i} remain, and we do not have a unified proof of them, like we have for the congruences in Theorem \ref{cong2k+1}.
\end{remark}
\textbf{Note added in proof.} After this paper was submitted, three cases of Conjecture \ref{conj2i} were proved completely, and several specific cases were proved by Das, Saikia, and Sarma \cite{DSS}.

\section*{Acknowledgements}

The second author would like to thank Prof. Nayandeep Deka Baruah for his encouragement and support. The second author was partially supported by an institutional fellowship for doctoral research from Tezpur University, Assam, India. He thanks the funding institution. The authors thank the anonymous referee for helpful comments.

\end{document}